\newcommand{\txx}{\textcolor}
\definecolor{dg}{rgb}{0, 0.5, 0}
\definecolor{dp}{rgb}{0.50, 0, 0.40}
\newcommand{\bbd}{{\mathbb{D}}}
\newcommand{\lp}{\left(}
\newcommand{\rp}{\right)}
\newcommand{\lc}{\left[}
\newcommand{\rc}{\right]}
\newcommand{\be}{\mathbf{E}}
\newcommand{\1}{{\bf 1}}
\newcommand{\beq}{\begin{equation}}
\newcommand{\eeq}{\end{equation}}
\newcommand{\beqs}{\begin{equation}}
\newcommand{\eeqs}{\end{equation}}
\newcommand{\bea}{\begin{eqnarray}}
\newcommand{\eea}{\end{eqnarray}}
\newcommand{\beas}{\begin{eqnarray*}}
\newcommand{\eeas}{\end{eqnarray*}}
\newcommand{\cf}{{\mathcal F}}
\newcommand{\PP}{{\mathbb P}}
\newcommand{\R}{{\mathbb R}}
\newtheorem{theorem}{Theorem}[section]
\newtheorem{lemma}[theorem]{Lemma}
\newtheorem{proposition}[theorem]{Proposition}
\theoremstyle{remark}
\newtheorem{remark}[theorem]{Remark}
\theoremstyle{remark}
\newtheorem{example}[theorem]{Example}
\theoremstyle{remark}
\newtheorem{foo}[theorem]{Remarks}
\title[Fractional stochastic integral Volterra equations]{Existence and  smoothness of the density of the solution  to  fractional stochastic integral Volterra equations}
\author[M. Besal\'u, D. M\'arquez-Carreras and E. Nualart]{Mireia Besal\'u, David M\'arquez-Carreras and Eulalia Nualart}
\date{\today}
\address{Mireia Besal\'u, Dep. Gen\`etica, Microbiologia i Estad\'istica, Universitat de Barcelona. Diagonal, 645, 08028 Barcelona}
\email{mbesalu@ub.edu}
\address{David  M\'arquez-Carreras, Facultat de Matem\`atiques i Inform\`atica, Universitat de Barcelona. Gran Via de les Corts Catalanes, 585, 08007 Barcelona}
\email{davidmarquez@ub.edu}
\address{Eulalia Nualart, Universitat Pompeu Fabra and Barcelona Graduate School of Economics, Ram\'on Trias Fargas 25-27, 08005
Barcelona, Spain.}
\email{eulalia.nualart@upf.edu}
\thanks{M. Besal\'u and D. M\'arquez-Carreras are supported by the grant MTM 2015-65092-P from MINECO, Spain. E. Nualart is supported from the Spanish Government grants
PGC2018-101643-B-I00, SEV-2015-0563, and Ayudas Fundaci\'on BBVA a Equipos de Investigaci\'on Cientifica 2017}
\begin{document}

\begin{abstract}
We consider stochastic Volterra integral equations driven by a fractional Brownian motion with Hurst parameter $H>\frac12$. We first derive supremum norm estimates for the solution and its Malliavin derivative. We then show existence and smoothness of the density under suitable nondegeneracy conditions. This extends the results in \cite{HN} and \cite{NS} where stochastic differential equations driven by fractional Brownian motion are considered. The proof uses a priori estimates for deterministic differential equations driven by a function in a suitable Sobolev space.
\end{abstract}

\maketitle

\renewcommand{\theequation}{1.\arabic{equation}}
\setcounter{equation}{0}
\section {Introduction}

We consider the stochastic integral Volterra equation on $\R^d$
\begin{equation}
X_t= X_0+\int_0^t b(t,s,X_s)ds+\int_0^t\sigma(t,s,X_s)dW_s^H,\quad t\in (0,T],  \label{eq:Volterra}
\end{equation}
where
$\sigma=(\sigma^{i,j})_{d \times m}:[0,T]^2 \times \R^d  \to \R^d \times \R^m$ and $b=(b^{i})_{d \times 1}:[0,T]^2 \times \R^d \to \R^d$ are measurable functions,
$W^H=\{W^{H,j}_t, t \in [0,T], j=1,\ldots, m\}$ are independent fractional Brownian motions (fBm) with Hurst parameter $H>\frac12$ defined in a complete probability space $(\Omega, \cf, \PP)$, and $X_0$ is a d-dimensional random variable. 

As $H>\frac12$, the integral with respect to $W^H$ can  be defined as a pathwise Riemann-Stieltjes integral
using the results by Young \cite{Y}.
Moreover, Z\"alhe \cite{Z} introduced a generalized Stieltjes integral using the techniques of fractional calculus. In particular, she obtained a formula for the Riemann-Stieltjes integral  using fractional derivatives
(see (\ref{eq:forpart}) below). Using this formula, Nualart and Rascanu \cite{N-R} proved a general result on existence, uniqueness and finite moments of the solution to a class of general differential equations included in \eqref{eq:Volterra}.
These results were extended by Besal\'u and Rovira \cite{BR} for the Volterra equation 
(\ref{eq:Volterra}). The proof of these results uses a priori estimates for a deterministic differential equation driven by a function in a suitable Sobolev space.

The first aim of this paper it is to obtain supremum norm estimates of the solution to (\ref{eq:Volterra}). We first consider the case where $\sigma$ is bounded since, in this case, the estimates are of polynomial type, while in the general case are of exponential type. In the case where $\sigma$ is bounded, we also obtain estimates for the Malliavin derivative of the solution and show existence and smoothness of the density. To obtain these  results, we first derive a priori estimates for some deterministic equations. Finally, in the case where $\sigma$ is not necessarily bounded, we also show existence of the density by first showing the Fr\'echet differentiability of the solution to the corresponding deterministic equation. 

These results provide extensions of the works by Hu and Nualart \cite{HN} and Nualart and Saussereau \cite{NS}, where stochastic differential equations driven by fBm  are considered. In particular, we provide a corrected proof of \cite[Theorem 7]{HN}, as there is a problem in their argument. The techniques used to obtain the a priori estimates in the present paper are much more involved than those in \cite{HN} and  \cite{NS} due to the time-dependence of the coefficients. As in those papers, our nondegeneracy assumption is an ellipticity-type condition, see Baudoin and Hairer \cite{BH} for the  existence and smoothness of the density under H\"ormander's condition for stochastic differential equations driven by a fBm with Hurst parameter $H>\frac12$.

Volterra equations driven by general It\^o processes or semimartingales are widely  studied,
see for instance \cite{AN,BM1,BM2,P}. Concerning
Volterra equations driven by fBm, the main references are the papers of Deya and Tindel \cite{DT1,DT2},
where existence and uniqueness is studied separately for the case $H >\frac13$ and 
 $H>\frac12$, using an algebraic integration setting and the Young integral, respectively.
 For the case $H>1/2$ and using the Young integral, existence and uniqueness of the solution  to equation (\ref{eq:Volterra}) with an  extra term driven by an independent Wiener process is proved in
 \cite{WY}. 
 See also \cite{S09, D15} for the existence and  uniqueness of fBm  driven Volterra equations in a Hilbert space. In \cite{ZY}, a class of fractional stochastic Volterra equations of convolution type driven by infinite dimensional fBm with Hurst index $H \in (0,1)$ is considered, and existence and regularity results of the stochastic convolution process are established. Last  but not least, existence of the density of the solution to equation (\ref{eq:Volterra}) in the  one dimensional case is obtained in
 \cite{F15} as a consequence of a Bismut type formula.
However, supremum norm estimates and existence and smoothness of the density in the multidimensional case do not seem to be studied yet in the literature for this kind of equations.

The structure of this paper is as follows: in the next section we introduce all the spaces, norms  and operators used through the paper. In Section 3, we obtain a priori  estimates for the solution of some systems of equations in a deterministic framework and study the Fr\'echet differentiability of one of them. Section 4 is devoted to apply the results obtained in Section 3 to the Volterra equation (\ref{eq:Volterra}) and derive the existence and smoothness of the density.

\vskip 12pt
{\bf Notation:} For any integer $k \geq 1$, we denote by $C^k_b$ the class of real-valued functions on $\R^d$ which are $k$ times continuously differentiable with bounded partial derivatives up to the $k$th order.
We denote by $C^{\infty}_b$ the the class of real-valued functions on $\R^d$ which are  infinitely differentiable and bounded together with all their derivatives.

Throughout all the paper, $C_{\alpha}$, $C_{\alpha, \beta}$, $c_{\alpha,T}$, etc. will denote generic constants that may change from line to line.
%%%%%%%%%%%%%%%%%%%%%%%%%%%%%%%%%%%%%%%%%%%%%%%%%%%%%%%%%%
\renewcommand{\theequation}{2.\arabic{equation}}
\setcounter{equation}{0}
\section{Preliminaries}

For any $\alpha \in (0, \frac12)$, we denote by $W^{\alpha}_1(0,T;\R^d)$ the space of measurable functions $f:[0,T] \rightarrow \R^d$ such that
$$
\Vert f \Vert_{\alpha,1}:=\sup_{t \in [0,T]} \left( \vert f(t) \vert
+\int_0^t \frac{\vert f(t)-f(s) \vert}{\vert t-s \vert^{\alpha+1}} ds\right) < \infty.
$$
For any $\alpha \in (0, \frac12)$, we denote by $W^{1-\alpha}_2(0,T;\R^m)$ the space of measurable functions $g:[0,T] \rightarrow \R^m$ such that
$$
\Vert g \Vert_{1-\alpha,2}:=\sup_{0\leq s < t \leq T} \left( \frac{\vert g(t)-g(s) \vert}{\vert t-s \vert^{1-\alpha}}
+\int_s^t \frac{\vert g(y)-g(s) \vert}{\vert y-s \vert^{2-\alpha}} dy\right) < \infty.
$$
For any $0<\lambda \leq 1$, and any interval $[a,b ]\subset [0,T]$, we denote by $C^{\lambda}(a,b;\R^d)$ the space of $\lambda$-H\"older continuous functions $f:[a,b] \rightarrow \R^d$ equipped with the norm
$$
\Vert f \Vert_{a,b,\lambda}:=\Vert f \Vert_{a,b,\infty}+
\sup_{a \leq s <t \leq b} \frac{\vert f(t)-f(s) \vert}{\vert t-s\vert ^{\lambda}}
$$
where $\Vert f \Vert_{a,b,\infty}:=\sup_{t \in [a,b]} \vert f(t) \vert$.
We set $\Vert f \Vert_{\lambda}=\Vert f \Vert_{0,T,\lambda}$ and
$\Vert f \Vert_{\infty}=\Vert f \Vert_{0,T,\infty}$.

Clearly, for any $\epsilon>0$,
\begin{equation}\label{inclusion}
C^{1-\alpha+\epsilon}(0,T;\R^m) \subset W^{1-\alpha}_2(0,T;\R^m)
\subset C^{1-\alpha}(0,T;\R^m).
\end{equation}
Moreover, as $\alpha \in (0, \frac12)$, 
$$
C^{1-\alpha}(0,T;\R^m) \subset W^{\alpha}_1(0,T;\R^m).
$$
For $d=m=1$, we simply write $W^{\alpha}_1(0,T)$, $W^{1-\alpha}_2(0,T)$, and $C^{\lambda}(0,T)$.

If $f\in C^{\lambda }(a,b)$ and $g\in C^{\mu }(a,b)$ with $\lambda
+\mu >1$, it is proved in \cite{Z} that the Riemman-Stieltjes integral $%
\int_{a}^{b}fdg$ exists and it can be expressed as 
\begin{equation}
\int_{a}^{b}fdg=(-1)^{\alpha }\int_{a}^{b}D_{a+}^{\alpha
}f(t)D_{b-}^{1-\alpha }g_{b-}(t)dt,  \label{eq:forpart}
\end{equation}
where $g_{b-}(t)=g(t)-g(b), $ $1-\mu <\alpha <\lambda $, and the fractional derivatives are defined as
\begin{eqnarray*}
D_{a+}^{\alpha }f(t) &=&\frac{1}{\Gamma (1-\alpha )}\left( \frac{f(t)}{%
(t-a)^{\alpha }}+\alpha \int_{a}^{t}\frac{f(t)-f(s)}{(t-s)^{\alpha +1}}%
ds\right) ,  \label{d2} \\
D_{b-}^{\alpha }f(t) &=&\frac{(-1)^{\alpha }}{\Gamma (1-\alpha )}\left( 
\frac{f(t)}{(b-t)^{\alpha }}+\alpha \int_{t}^{b}\frac{f(t)-f(s)}{%
(s-t)^{\alpha +1}}ds\right).  \label{d3}
\end{eqnarray*}
We refer to \cite{N-R} and \cite{Z} and the references therein for a detailed account about this generalized integral and the fractional calculus.

Let $\Omega=C_0([0,T]; \R^m)$ be the Banach space of continuous functions, null at time 0, equipped with the supremum norm.
Let ${\rm P}$ be the unique probability measure on $\Omega$ such that the canonical process $\{W^H_t, t \in [0,T]\}$ is an $m$-dimensional fractional Brownian motion with Hurst parameter $H>\frac12$.

We denote by $\mathcal{E}$ the space of step functions on $[0,T]$ with values in $\R^m$. Let $\mathcal{H}$ be the Hilbert space defined as the closure of $\mathcal{E}$ with respect to the scalar product
$$
\langle ({\bf 1}_{[0, t_1]},\ldots,{\bf 1}_{[0, t_m]} ),({\bf 1}_{[0, s_1]},\ldots,{\bf 1}_{[0, s_m]} ) \rangle_{\mathcal{H}}=\sum_{i=1}^m R_H(t_i, s_i),
$$ 
where
$$
R_H(t, s)=\int_0^{t \wedge s} K_H(t,r) K_H(s,r) dr,
$$
and $K_H(t,s)$ is the square integrable kernel defined by
\begin{equation} \label{kh}
K_H(t,s)=c_H s^{1/2-H} \int_s^t (u-s)^{H-3/2} u^{H-1/2} du,
\end{equation}
where $c_H=\sqrt{\frac{H(2H-1)}{\beta(2-2H, H-1/2)}}$, $\beta$ denotes the Beta function and $t>s$. For $t \leq s$, we set $K_H(t,s)=0$.

The mapping $({\bf 1}_{[0, t_1]},\ldots,{\bf 1}_{[0, t_m]} ) \rightarrow \sum_{i=1}^m W^{H,i}_{t_i}$ can be extended to an isometry between $\mathcal{H}$ and the Gaussian space $\mathcal{H}_1$ associated to $W^H$. We denote this isometry by $\varphi \rightarrow W^H(\varphi)$.

Consider the operator $K^{\ast}_H$ from $\mathcal{E}$ to $L^2(0,T;\R^m)$
defined by
$$
(K^{\ast}_H \varphi)^i (s)=\int_s^T \varphi^i(t) \partial_t K_H(t,s) dt.
$$
From (\ref{kh}), we get
$$
\partial_t K_H(t,s)=c_H \left(\frac{t}{s} \right)^{H-1/2} (t-s)^{H-3/2}.
$$
Notice that
$$
K^{\ast}_H({\bf 1}_{[0, t_1]},\ldots,{\bf 1}_{[0, t_m]} )
=(K_H(t_1,\cdot), \ldots, K_H(t_m,\cdot)).
$$
For any $\varphi, \psi \in \mathcal{E}$, 
$$
\langle \varphi, \psi  \rangle_{\mathcal{H}}=\langle K^{\ast}_H \varphi, K^{\ast}_H \psi \rangle_{L^2(0,T;\R^m)}={\rm E}(W^H(\varphi)W^H(\psi))
$$
and $K^{\ast}_H$ provides an isometry between the Hilbert space $\mathcal{H}$ and a closed subspace of $L^2(0,T;\R^m)$.

Following \cite{NS}, we consider the fractional version of the Cameron-Martin space $\mathcal{H}_H:=\mathcal{K}_H(L^2(0,T;\R^m))$, where for $h \in   L^2(0,T;\R^m)$,
$$
(\mathcal{K}_H h)(t):=\int_0^t K_H(t,s) h_s ds.
$$
We finally denote by $\mathcal{R}_H=\mathcal{K}_H \circ \mathcal{K}_H^{\ast}: \mathcal{H} \rightarrow   \mathcal{H}_H$ the operator
$$
\mathcal{R}_H \varphi=\int_0^{\cdot} K_H(\cdot, s) (\mathcal{K}^{\ast}_H h)(s) ds.
$$
We remark that for any $\varphi \in \mathcal{H}$, $\mathcal{R}_H \varphi$ is H\"older continuous of order $H$. Therefore, for any $1-H<\alpha<1/2$,
$$
\mathcal{H}_H \subset C^{H}(0,T;\R^m) \subset W^{1-\alpha}_2(0,T;\R^m).
$$
Notice that $\mathcal{R}_H {\bf 1}_{[0,t]}=R_H(t, \cdot)$, and, as a consequence, $\mathcal{H}_H$ is the Reproducing Kernel Hilbert Space associated with the Gaussian process $W^H$. The injection
$\mathcal{R}_H: \mathcal{H} \rightarrow \Omega$ embeds $\mathcal{H}$ densely into $\Omega$ and for any $\varphi \in \Omega^{\ast} \subset  \mathcal{H}$,
$$
{\rm E} \left( e^{i W^H(\varphi)}\right)=
\exp \left( -\frac12 \Vert \varphi \Vert^2_{\mathcal{H}} \right).
$$
As a consequence, $(\Omega, \mathcal{H}, {\rm P})$ is an abstract Wiener space in the sense of Gross.

\renewcommand{\theequation}{3.\arabic{equation}}
\setcounter{equation}{0}
\section{Deterministic differential equations}

Fix $0<\alpha<\frac12$. Consider the deterministic differential equation on $\R^d$
\begin{equation}\label{eq:det}
x_t=x_0+\int_0^t b(t,s,x_s)ds+\int_0^t \sigma(t,s,x_s)dg_s,\qquad t\in[0,T],
\end{equation}
where $g \in W_2^{1-\alpha}(0,T; \R^m)$, $x_0 \in \R^d$, and $b$ and $\sigma$ are as in (\ref{eq:Volterra}).
\vskip 12pt
Consider the following hypotheses on $b$ and $\sigma$:
\begin{itemize}
\item[\bfseries(H1)] $\sigma:[0,T]^2\times\R^d\rightarrow \R^d\times\R^m$ is a measurable function such that the derivatives  $\partial_x \sigma(t,s,x)$,  $\partial_t \sigma(t,s,x)$ and $\partial^2_{x,t} \sigma(t,s,x)$ exist. Moreover,
there exist some constants  $0<\beta,\, \mu, \, \delta\leq
1$ and for every $N\geq 0$ there exists $K_N>0$ such that the following properties hold:

\begin{enumerate}
\item $\left|\sigma(t,s,x)-\sigma(t,s,y)\right| + \left|\partial_{t}\sigma(t,s,x)-\partial_{t}\sigma(t,s,y)\right|\leq K\left|x-y\right|,$\\$\forall x,y\in\R^d,\;\forall s,t\in[0,T]$,
\item $\left|\partial_{x_i}\sigma(t,s,x)-\partial_{y_i}\sigma(t,s,y)\right|+\left|\partial_{x_i,t}^2\sigma(t,s,x)-\partial_{y_i,t}^2\sigma(t,s,y)\right|\leq K_N\left|x-y\right|^\delta,$\\$\forall |x|,|y|\leq N,\;\forall s,t\in[0,T],\; i=1\ldots d$,
\item $\left|\sigma(t_1,s,x)-\sigma(t_2,s,x)\right|+\left|\partial_{x_i}\sigma(t_1,s,x)-\partial_{x_i}\sigma(t_2,s,x)\right|
\leq K\left|t_1-t_2\right|^{\mu},$\\ $\forall
x\in\R^d,\;\forall t_1,t_2,s\in[0,T],\; i=1\ldots d$,
\item $\left|\sigma(t,s_1,x)-\sigma(t,s_2,x)\right|+\left|\partial_{t}\sigma(t,s_1,x)-\partial_{t}\sigma(t,s_2,x)\right|\leq K\left|s_1-s_2\right|^\beta,\\ \forall x\in\R^d,\;\forall s_1,s_2,t\in[0,T]$,
\item $\left|\partial_{x_i,t}^2\sigma(t,s_1,x)-\partial_{x_i,t}^2\sigma(t,s_2,x)\right|+\left|\partial_{x_i}\sigma(t,s_1,x)-\partial_{x_i}\sigma(t,s_2,x)\right| \leq K\left|s_1-s_2\right|^\beta$,\\$\forall x\in\R^d,\;\forall s_1,s_2,t\in[0,T],\, i=1,\ldots, d$.
\end{enumerate}
\item [\bfseries(H2)] $b:[0,T]^2\times \R^d \rightarrow \R^d$ is a measurable function such that there exists $b_0\in L^\rho([0,T]^2;\R^d)$ with $\rho\geq 2$, $0<\mu\leq 1$ and $\forall N\geq 0$ there exists $L_N>0$ such that:
\begin{enumerate}
\item $\left|b(t,s,x)-b(t,s,y)\right|\leq L_N\left|x-y\right|,\; \forall |x|,|y|\leq N,\,\forall s,t\in[0,T]$,
\item $\left|b(t_1,s,x)-b(t_2,s,x)\right|\leq L\left|t_1-t_2\right|^\mu,\; \forall x\in\R^d,\,\forall s,t_1,t_2\in[0,T]$,
\item $\left|b(t,s,x)\right|\leq L_0|x|+b_0(t,s),\quad \forall x\in\R^d,\;\forall s,t\in[0,T]$,
\item $\left|b(t_1,s,x_1)-b(t_1,s,x_2)-b(t_2,s,x_1)+b(t_2,s,x_2)\right|\leq L_N |t_1-t_2||x_1-x_2|,$\\
$\forall |x_1|,|x_2|\leq N,\;\forall t_1,t_2,s\in[0,T]$.
\end{enumerate}
\end{itemize}
\vskip 6pt
\begin{remark}
Actually, we can consider $\sigma$ and $b$ defined only in the set
$D \times \R^d$ with $D=\{(t,s)\in [0,T]^2; s \le t \}$.
\end{remark}

The following existence and uniqueness result holds.
\begin{theorem} \textnormal{\cite[Theorem 4.1]{BR}} \label{t:exist}
Assume that $\sigma$ and $b$ satisfy hypotheses ${\bf (H1)}$ and ${\bf (H2)}$ with $\rho=1/\alpha$, $\min\{\beta, \frac{\delta}{\1+\delta}\}>1-\mu$ and
$$
0<1-\mu<\alpha<\alpha_0:=\min\left\{\frac12, \beta, \frac{\delta}{1+\delta}\right\}.
$$
Then, equation \textnormal{(\ref{eq:det})} has a unique solution 
$x \in  C^{1-\alpha}(0,T;\R^d)$.
\end{theorem}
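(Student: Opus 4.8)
The plan is to realize the solution as the unique fixed point of the map
\[
\Phi(x)_t = x_0 + \int_0^t b(t,s,x_s)\,ds + \int_0^t \sigma(t,s,x_s)\,dg_s, \qquad t\in[0,T],
\]
on the space $C^{1-\alpha}(0,T;\R^d)$, where the $g$-integral is read through the fractional integration-by-parts formula \eqref{eq:forpart}. The first task is to check that $\Phi$ is well defined. Applying \eqref{eq:forpart} and the elementary bound $\abs{D_{t-}^{1-\alpha}g_{t-}(s)}\le C_\alpha\norm{g}_{1-\alpha,2}$, the $g$-integral is dominated by
\[
C_\alpha\norm{g}_{1-\alpha,2}\int_0^t\left(\frac{\abs{\sigma(t,s,x_s)}}{s^\alpha} + \int_0^s\frac{\abs{\sigma(t,s,x_s)-\sigma(t,r,x_r)}}{(s-r)^{\alpha+1}}\,dr\right)ds.
\]
Writing $\sigma(t,s,x_s)-\sigma(t,r,x_r)=[\sigma(t,s,x_s)-\sigma(t,s,x_r)]+[\sigma(t,s,x_r)-\sigma(t,r,x_r)]$ and estimating the two pieces by the Lipschitz bound (H1)(1) in space together with the $\beta$-H\"older bound (H1)(4) in $s$, the inner integral converges because $\alpha<\tfrac12$ (for the Lipschitz piece, using $x\in C^{1-\alpha}$) and $\alpha<\beta$ (for the H\"older piece), both of which follow from $\alpha<\alpha_0$. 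The drift is an ordinary integral, controlled by the growth condition (H2)(3) and H\"older's inequality with $b_0\in L^\rho$, $\rho=1/\alpha$.

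Next I would prove that $\Phi$ maps a ball of $C^{1-\alpha}$ into itself. The supremum bound follows by inserting the preceding estimate, together with the linear growth of $\sigma$ implied by (H1)(1),(3),(4), into a Gronwall-type argument adapted to the singular kernels. The genuinely new difficulty relative to the non-Volterra equations of \cite{N-R}, whose coefficients do not depend on the upper limit $t$, lies in the $(1-\alpha)$-H\"older seminorm of $\Phi(x)$: for $t_1<t_2$ one must bound
\[
\int_0^{t_2}\sigma(t_2,s,x_s)\,dg_s-\int_0^{t_1}\sigma(t_1,s,x_s)\,dg_s,
\]
which I would split into the contribution of the change of the upper limit, $\int_{t_1}^{t_2}\sigma(t_2,s,x_s)\,dg_s$, and the contribution of the change of the coefficient in its first argument, $\int_0^{t_1}[\sigma(t_2,s,x_s)-\sigma(t_1,s,x_s)]\,dg_s$. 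The first behaves like $\norm{g}_{1-\alpha,2}\abs{t_2-t_1}^{1-\alpha}$, while the second carries a factor $\abs{t_2-t_1}^{\mu}$ coming from the $\mu$-H\"older regularity in $t$ of (H1)(3); the hypothesis $1-\mu<\alpha$ is exactly what guarantees $\mu\ge 1-\alpha$, so the whole increment stays of order $\abs{t_2-t_1}^{1-\alpha}$. The analogous splitting for the drift uses (H2)(2) and the $L^\rho$ integrability of $b_0$.

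Finally, for uniqueness I would show that $\Phi$ is a contraction. For $x,y\in C^{1-\alpha}$ with the same initial value, the difference $\Phi(x)-\Phi(y)$ is estimated by the same machinery, but now the increments of $\sigma$ under the fractional derivative produce second-order space differences of the form $[\sigma(t,s,x_s)-\sigma(t,s,y_s)]-[\sigma(t,r,x_r)-\sigma(t,r,y_r)]$; expanding through $\partial_x\sigma$ and using its $\delta$-H\"older continuity (H1)(2) leads to a singular integral that converges precisely when $\alpha<\tfrac{\delta}{1+\delta}$. Each of the kernel integrals carries a positive power of the interval length, so on a small enough interval $[0,T_0]$ the map $\Phi$ is a contraction on $C^{1-\alpha}(0,T_0;\R^d)$, giving a unique local solution; since $T_0$ depends only on $\norm{g}_{1-\alpha,2}$ and the structural constants, one patches over $[0,T_0],[T_0,2T_0],\dots$ to reach the unique global solution in $C^{1-\alpha}(0,T;\R^d)$. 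The main obstacle, present in every increment estimate, is the coefficient-change term forced by the time-dependence of $\sigma$ and $b$ in their first argument: it is this term that dictates the constraints $\min\{\beta,\tfrac{\delta}{1+\delta}\}>1-\mu$ and $1-\mu<\alpha$, and the delicate part is verifying that the resulting singular integrals both converge and combine into a positive power of the interval length.
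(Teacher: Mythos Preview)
The paper does not give its own proof of this statement: Theorem~\ref{t:exist} is simply quoted from \cite[Theorem~4.1]{BR}, so there is nothing to compare line by line. Your outline is in the spirit of that reference (and of \cite{N-R}, which \cite{BR} extends): a fixed-point argument for the solution map, with the fractional representation \eqref{eq:forpart} controlling the $g$-integral, and with the extra ``change of the first argument'' terms in $\sigma$ and $b$ providing the Volterra-specific contributions.

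Two points are worth flagging. First, \cite{BR} (like \cite{N-R}) runs the fixed-point not in $C^{1-\alpha}$ directly but in the space $W^{\alpha}_1(0,T;\R^d)$ with the norm $\|\cdot\|_{\alpha,1}$; the $C^{1-\alpha}$ regularity is then read off a posteriori from estimates such as \cite[Propositions~2.2 and~3.2]{BR}. This is not merely cosmetic: the $\|\cdot\|_{\alpha,1}$ norm is tailored to absorb the fractional derivative $D^{\alpha}_{0+}$ appearing in \eqref{eq:forpart}, and the contraction and Gronwall estimates close more naturally there (indeed, \cite{N-R} uses an exponentially weighted variant of this norm to obtain a global contraction and avoid patching altogether). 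Second, because the Lipschitz-type constants $K_N$ in {\bf (H1)}(2) and $L_N$ in {\bf (H2)}(1) are only local, your claim that the small-interval length $T_0$ ``depends only on $\|g\|_{1-\alpha,2}$ and the structural constants'' is not automatic; in \cite{N-R,BR} this is handled by first proving the result under global hypotheses and then passing to the local case via a truncation of the coefficients combined with an a priori bound on $\|x\|_{\alpha,1}$. The patching step itself is also slightly more delicate in the Volterra setting, since on $[T_0,2T_0]$ the equation still carries the history integrals $\int_0^{T_0}b(t,s,x_s)\,ds$ and $\int_0^{T_0}\sigma(t,s,x_s)\,dg_s$, whose dependence on the current time $t$ must be tracked. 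None of this invalidates your plan, but it explains why the cited proof is organized around $W^{\alpha}_1$ and a truncation argument rather than a direct $C^{1-\alpha}$ contraction.
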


The first aim of this section is to obtain estimates for the supremum norm of the solution to \eqref{eq:det}. We first consider the case where $\sigma$ is bounded and the bound on $b$ does not depend on $x$.
\begin{theorem} \label{th:1}
Assume the hypotheses of Theorem \ref{t:exist} with $\mu=1$ and {\bf (H2)}\textnormal{(3)} replaced by 
\begin{equation} \label{e3p}
\left|b(t,s,x)\right|\leq L_0+b_0(t,s),\quad \forall x\in\R^d,\;\forall s,t\in[0,T].
\end{equation}
Assume that $\sigma$ is bounded.
Then, there exists a constant $C_{\alpha,\beta}>0$ such that
\begin{equation} \label{eq:result1}
\|x\|_{\infty}\leq |x_0|+1+T\left(\left(K_{T,\alpha}^{(1)}+K_{T,\alpha,\beta}^{(2)}\|g\|_{1-\alpha}\right)^{1/(1-\alpha)}\vee 1\vee T\right),
\end{equation}
where $K_{T,\alpha}^{(1)}=4(L(T\vee 1)+L_0+B_{0,\alpha})$ and $K_{T,\alpha,\beta}^{(2)}=C_{\alpha,\beta}(T+1+\|\sigma\|_\infty)$, 
$L,\,L_0$ are the constants in Hypothesis {\upshape\bfseries (H2)}, and $B_{0,\alpha}:=\sup_{t\in[0,T]}\lp\int_0^t |b_0(t,u)|^{1/\alpha}du\rp^\alpha$.
\end{theorem}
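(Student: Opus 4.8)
The plan is to bound the pathwise integral $\int_0^t\sigma(t,s,x_s)\,dg_s$ by means of the fractional integration-by-parts formula (\ref{eq:forpart}), and then to run a partition-and-summation argument on $[0,T]$. The decisive structural point is that, since $\sigma$ is bounded and the drift bound (\ref{e3p}) carries no factor $|x|$, no term proportional to $\|x\|_\infty$ will ever multiply $\|g\|_{1-\alpha}$ in the estimates; this is exactly what turns the a priori bound from exponential into polynomial and produces the exponent $1/(1-\alpha)$. Throughout I would use that $x\in C^{1-\alpha}(0,T;\R^d)$ by Theorem \ref{t:exist}.

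First I would fix a subinterval $[a,b]\subset[0,T]$ and, writing the integral through (\ref{eq:forpart}) and bounding the backward fractional derivative of $g$ by a constant multiple of $\|g\|_{1-\alpha}$, estimate, for $t\in[a,b]$,
\[
\Bigl|\int_a^t\sigma(t,s,x_s)\,dg_s\Bigr|\le C_\alpha\|g\|_{1-\alpha}\int_a^t\Bigl(\frac{|\sigma(t,s,x_s)|}{(s-a)^\alpha}+\int_a^s\frac{|\sigma(t,s,x_s)-\sigma(t,r,x_r)|}{(s-r)^{\alpha+1}}\,dr\Bigr)\,ds.
\]
Bounding $|\sigma|\le\|\sigma\|_\infty$ and splitting $\sigma(t,s,x_s)-\sigma(t,r,x_r)$ into an $s$-increment (controlled by {\bfseries (H1)}(4), giving $K|s-r|^\beta$ with $\beta>\alpha$) and an $x$-increment (controlled by {\bfseries (H1)}(1), giving $K|x_s-x_r|$), this reduces to an explicitly integrable contribution of order $\|g\|_{1-\alpha}(1+\|\sigma\|_\infty)(b-a)^{1-\alpha}$ plus a term proportional to the $(1-\alpha)$-Hölder seminorm of $x$ on $[a,b]$ times a positive power of $b-a$. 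For the drift I would use (\ref{e3p}) and Hölder's inequality with exponents $1/\alpha$ and $1/(1-\alpha)$, obtaining $|\int_a^t b(t,s,x_s)\,ds|\le L_0(b-a)+B_{0,\alpha}(b-a)^{1-\alpha}$; these are the constants $L_0,B_{0,\alpha}$ entering $K^{(1)}_{T,\alpha}$, while $\|\sigma\|_\infty$ enters $K^{(2)}_{T,\alpha,\beta}=C_{\alpha,\beta}(T+1+\|\sigma\|_\infty)$.

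Next I would estimate the increments $x_t-x_{t'}$ for $a\le t'<t\le b$ to control the Hölder seminorm above, decomposing them into the integrals over $[t',t]$ (which reproduce local seminorm terms and terms of order $(t-t')^{1-\alpha}$) and the two ``frozen upper-limit'' differences $\int_0^{t'}[b(t,s,x_s)-b(t',s,x_s)]\,ds$ and $\int_0^{t'}[\sigma(t,s,x_s)-\sigma(t',s,x_s)]\,dg_s$. With $\mu=1$, {\bfseries (H2)}(2) bounds the first by $L(t-t')\,t'$, which supplies the remaining $L$-dependent constant $L(T\vee1)$ in $K^{(1)}_{T,\alpha}$; {\bfseries (H1)}(3) gives the pointwise bound $|\sigma(t,s,\cdot)-\sigma(t',s,\cdot)|\le K(t-t')$, and {\bfseries (H1)}(4)--(5) provide the joint $(t,s)$- and $(t,x)$-regularity needed to estimate the $s$-fractional seminorm of $s\mapsto\sigma(t,s,x_s)-\sigma(t',s,x_s)$, so that the second frozen term is of order $(t-t')\|g\|_{1-\alpha}(1+[x]_{1-\alpha})$, where $[x]_{1-\alpha}$ is the global Hölder seminorm. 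Collecting these, the local seminorm satisfies an inequality with self-coefficient $C(b-a)^{\theta}\|g\|_{1-\alpha}$ for some $\theta>0$, which is absorbed once $b-a$ is small; one then controls the global seminorm $[x]_{1-\alpha}$ in tandem (finite by Theorem \ref{t:exist}), bounding it by a polynomial in $\|g\|_{1-\alpha}$ through the same partition scheme and feeding it back, the boundedness of $\sigma$ guaranteeing that this remains polynomial.

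Finally I would choose a uniform partition $0=\tau_0<\dots<\tau_n=T$ with mesh $\Delta$ satisfying $(K^{(1)}_{T,\alpha}+K^{(2)}_{T,\alpha,\beta}\|g\|_{1-\alpha})\Delta^{1-\alpha}\le1$, i.e. $\Delta^{-1}\sim(K^{(1)}_{T,\alpha}+K^{(2)}_{T,\alpha,\beta}\|g\|_{1-\alpha})^{1/(1-\alpha)}$, so that the increment of the supremum across each subinterval is bounded by a universal constant; summing over the $n\sim T\Delta^{-1}$ subintervals and adding $|x_0|$ yields (\ref{eq:result1}), the factors $\vee\,1\vee T$ absorbing the degenerate cases (small $\|g\|_{1-\alpha}$, or a single subinterval). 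I expect the main obstacle to be precisely the frozen term $\int_0^{t'}[\sigma(t,s,x_s)-\sigma(t',s,x_s)]\,dg_s$: unlike the autonomous SDE setting of \cite{HN,NS}, it is an integral over the entire past whose estimate couples the local Hölder control on $[a,b]$ to the global regularity of $x$, and its treatment is what forces the extra hypotheses {\bfseries (H1)}(3)--(5) (together with the joint conditions on $b$) and makes the bookkeeping substantially heavier than in the time-homogeneous case.
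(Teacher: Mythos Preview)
Your overall architecture---fractional integration by parts, local $(1-\alpha)$-H\"older control on subintervals, a uniform mesh $\Delta$ with $\Delta^{-(1-\alpha)}\sim K^{(1)}_{T,\alpha}+K^{(2)}_{T,\alpha,\beta}\|g\|_{1-\alpha}$, and summation over $n\sim T/\Delta$ blocks---matches the paper. The gap is in your handling of the frozen term $\int_0^{t'}[\sigma(t,s,x_s)-\sigma(t',s,x_s)]\,dg_s$. You bound it by $C(t-t')\|g\|_{1-\alpha}(1+[x]_{1-\alpha})$ with the \emph{global} seminorm $[x]_{1-\alpha}$, and then propose to ``control the global seminorm in tandem\ldots through the same partition scheme and feeding it back''. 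This is circular: your local estimate on each $[i\Delta,(i+1)\Delta]$ already requires $[x]_{1-\alpha}$ as input, so the partition scheme cannot produce an independent polynomial bound on $[x]_{1-\alpha}$ to feed back. Nothing in the argument as written prevents the degree from escalating at each pass, and in fact the paper never bounds the global seminorm at all.

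The device you are missing is to decompose the frozen integral itself along the partition. For $s,t\in[i\Delta,(i+1)\Delta]$ the paper writes
\[
\int_0^{i\Delta}\bigl[\sigma(t,r,x_r)-\sigma(s,r,x_r)\bigr]\,dg_r=\sum_{\ell=1}^{i}\int_{(\ell-1)\Delta}^{\ell\Delta}\bigl[\sigma(t,r,x_r)-\sigma(s,r,x_r)\bigr]\,dg_r
\]
and estimates each piece using only the \emph{local} seminorm $\|x\|_{(\ell-1)\Delta,\ell\Delta,1-\alpha}$. Because $\mu=1$ supplies a full factor $(t-s)$ and each block has length $\Delta$, after dividing by $(t-s)^{1-\alpha}$ the total contribution is $C\|g\|_{1-\alpha}\,\Delta^{2-\alpha}\sum_{\ell=1}^{i}\|x\|_{(\ell-1)\Delta,\ell\Delta,1-\alpha}$. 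This yields the closed recursion
\[
\|x\|_{i\Delta,(i+1)\Delta,1-\alpha}\le A_1+A_2\,\Delta^{2-\alpha}\sum_{\ell=1}^{i}\|x\|_{(\ell-1)\Delta,\ell\Delta,1-\alpha},
\]
and a straightforward induction (using $i\Delta\le T$) gives $\Delta^{1-\alpha}\|x\|_{i\Delta,(i+1)\Delta,1-\alpha}\le1$ for every $i$. Hence each block contributes at most $1$ to the supremum and $\|x\|_\infty\le|x_0|+n$, which is exactly (\ref{eq:result1}). You correctly flagged the frozen term as the main obstacle; the point is that it must be resolved through local seminorms on past blocks, not through the global seminorm.
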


\begin{remark}
The techniques used in the proof do not seem to extend to the case $0<\mu<1$, thus it is left open for future work. More specifically, if $\mu<1$, the first term in equation (\ref{eq:cotaC1}) is of order
$i\Delta^{\mu+1-\alpha}$. Then, when dividing by $(t-s)^{1-\alpha}$ we obtain a term of order $i\Delta^{\mu}$ which cannot be bounded by $T$.
\end{remark}

\begin{proof}
We divide the interval $[0,T]$ into $n=[T/\Delta]+1$ subintervals, where $[a]$ denotes the largest integer strictly bounded by $a$ and $\Delta \leq 1$ will be chosen below.
\vskip 4pt
\noindent
{\it Step 1.} We start studying $\|x\|_{0,\Delta,1-\alpha}$. For $s,\,t\in[0,\Delta]$, $s<t$,
\begin{eqnarray}
|x_t-x_s|&\leq& \left|\int_0^s (b(t,r,x_r)-b(s,r,x_r))dr\right|+\left|\int_s^t b(t,r,x_r)dr\right|\label{eq:dec1}\\
&&+ \left| \int_0^s (\sigma(t,r,x_r)-\sigma(s,r,x_r))dg_r\right|+ \left| \int_s^t \sigma(t,r,x_r)dg_r\right| = A+B+C+D.\nonumber
\end{eqnarray}
\vskip 4pt
\noindent
Using the Hypothesis {\bf (H2)}(2), the term $A$ is easy to bound
\begin{equation} \label{eq:cotaA}
A\leq Ls(t-s).
\end{equation}
For the second term we use (\ref{e3p}) to obtain
\begin{equation} \label{eq:cotaB}
B\leq \left|\int_s^t\left( L_0+b_0(s,r)\right)dr \right| \leq L_0(t-s)+B_{0,\alpha}(t-s)^{1-\alpha}.
\end{equation}
\vskip 5pt
\noindent
For the next term, we use  \cite[Lemma A.2]{BR} to get
\begin{eqnarray*}
\left|D_{0+}^\alpha \left[ \sigma(t,\cdot,x_\cdot)-\sigma(s,\cdot,x_\cdot)\right] (r)\right|&\leq& \frac{K (t-s)}{\Gamma(1-\alpha)}
\lp \frac{1}{r^\alpha}\right.\left.+\alpha  \int_0^r \frac{(r-u)^\beta+|x_r-x_u|}{(r-u)^{\alpha+1}}du\rp\\
&\leq& C_{\alpha, \beta}(t-s)\left(r^{-\alpha}+ r^{\beta-\alpha}+\|x\|_{0,s,1-\alpha}r^{1-2\alpha}\right).
\end{eqnarray*}
Putting together the previous estimate, equation (\ref{eq:forpart}) and the estimate in \cite[(3.5)]{HN} we conclude that
\begin{eqnarray} \label{eq:cotaC}
C&\leq& C_{\alpha,\beta}\|g\|_{1-\alpha}(t-s) \left| \int_0^s \lp  r^{-\alpha}+r^{\beta-\alpha}+\|x\|_{0,s,1-\alpha}r^{1-2\alpha} \rp dr \right|\nonumber\\
&\leq& C_{\alpha,\beta} \|g\|_{1-\alpha}(t-s) \lp s^{1-\alpha}+s^{1+\beta-\alpha}+s^{2-2\alpha}\|x\|_{0,s,1-\alpha} \rp.
\end{eqnarray}
For term D, we obtain, proceeding similarly as for term $C$,
\begin{eqnarray*}
\left|D_{s+}^\alpha \left[ \sigma(t,\cdot,x_\cdot)\right] (r)\right|&\leq& \frac{1}{\Gamma(1-\alpha)}\lp {\|\sigma\|_\infty}(r-s)^{-\alpha}\right.\left.+\alpha K \int_s^r \frac{(r-u)^\beta+|x_r-x_u|}{(r-u)^{\alpha+1}}du\rp\\
&\leq& C_{\alpha,\beta}\lp \|\sigma\|_\infty(r-s)^{-\alpha}+ (r-s)^{\beta-\alpha}+\|x\|_{s,t,1-\alpha}(r-s)^{1-2\alpha}\rp.
\end{eqnarray*}
Therefore,
\begin{eqnarray}\label{eq:cotaD}
D&\leq& C_{\alpha,\beta}\|g\|_{1-\alpha} \int_s^t\lp\|\sigma\|_\infty(r-s)^{-\alpha}+ (r-s)^{\beta-\alpha}+\|x\|_{s,t,1-\alpha}(r-s)^{1-2\alpha}\rp dr\nonumber\\
&\leq& C_{\alpha,\beta}\|g\|_{1-\alpha}(t-s)^{1-\alpha} \lp \|\sigma\|_\infty+(t-s)^{\beta}+\|x\|_{s,t,1-\alpha}(t-s)^{1-\alpha}\rp.
\end{eqnarray}
Next, introducing \eqref{eq:cotaA}, \eqref{eq:cotaB}, \eqref{eq:cotaC} and \eqref{eq:cotaD} into (\ref{eq:dec1}), we obtain 
\begin{eqnarray*}
\frac{|x_t-x_s|}{(t-s)^{1-\alpha}}&\leq& L s(t-s)^{\alpha}+L_0(t-s)^{\alpha}+B_{0,\alpha}\\
&&+C_{\alpha,\beta}\|g\|_{1-\alpha}\lp (t-s)^{\alpha}\lp s^{1-\alpha}+s^{1+\beta-\alpha}+s^{2-2\alpha}\|x\|_{0,s,1-\alpha}\rp\right.\\
&&\left.+\|\sigma\|_\infty+(t-s)^{\beta}+\|x\|_{s,t,1-\alpha}(t-s)^{1-\alpha}\rp.
\end{eqnarray*}
Thus, 
\begin{eqnarray*} 
\|x\|_{0,\Delta,1-\alpha}&\leq& L\Delta^{1+\alpha}+L_0 \Delta^{\alpha}+B_{0,\alpha}\nonumber\\
&&+C_{\alpha,\beta}\|g\|_{1-\alpha}\lp \Delta+\Delta^{1+\beta}+\Delta^{\beta}+\|\sigma\|_\infty+\|x\|_{0,\Delta,1-\alpha}(\Delta^{2-\alpha}+\Delta^{1-\alpha})\rp \nonumber\\
&\leq& L+L_0+B_{0,\alpha}+C_{\alpha,\beta}\|g\|_{1-\alpha}\lp 1+\|\sigma\|_\infty+\|x\|_{0,\Delta,1-\alpha}\Delta^{1-\alpha}\rp,
\end{eqnarray*}
as $\Delta \leq 1$. Choosing $\Delta$ such that
\begin{equation} \label{d1}
\Delta^{1-\alpha} \leq \frac{1}{2C_{\alpha,\beta}\|g\|_{1-\alpha}},
\end{equation}
we obtain that 
\begin{equation}\label{eq:cotabeta1}
\|x\|_{0,\Delta,1-\alpha}\leq 2\lp L+L_0+B_{0,\alpha}+C_{\alpha,\beta}\|g\|_{1-\alpha}(1+\|\sigma\|_\infty)\rp.
\end{equation}
Therefore,
\begin{equation} \label{a1}
\|x\|_{0,\Delta,\infty }\leq |x_0|+\|x\|_{0,\Delta,1-\alpha}\Delta^{1-\alpha}\le |x_0|+\frac12,
\end{equation}
if $\Delta$ is such that
\begin{equation}\label{a2}
\Delta^{1-\alpha}\leq \frac{1}{4\lp L+L_0+B_{0,\alpha}+C_{\alpha,\beta}\|g\|_{1-\alpha}(1+\|\sigma\|_\infty)\rp}.
\end{equation}
\vskip 4pt
\noindent
{\it Step 2.} We next study $\|x\|_{s,t,1-\alpha}$ for $s,t\in [i\Delta, (i+1)\Delta]$, $s<t$. We write
\begin{eqnarray}
|x_t-x_s|&\leq& \left| \int_0^s\lp b(t,r,x_r)-b(s,r,x_r)\rp dr\right|+ \left| \int_s^t b(t,r,x_r) dr\right|\nonumber\\
&&+ \left| \int_0^{i\Delta}\lp \sigma(t,r,x_r)-\sigma(s,r,x_r)\rp dg_r\right|+ \left| \int_{i\Delta}^s\lp \sigma(t,r,x_r)-\sigma(s,r,x_r)\rp dg_r\right|\nonumber\\&&+\left|\int_s^t \sigma(t,r,x_r)dg_r\right|=A+B+C_1^i+C_2^i+D. \label{eq:dec2}
\end{eqnarray}
The terms $A$, $B$, and $D$ can be bounded exactly as in Step 1. Thus, it suffices to bound the terms $C_1^i$ and $C_2^i$. We start with $C_1^i$. We write
\begin{equation*}
C_1^i \leq \sum_{\ell=1}^i\left\vert\int_{(\ell-1)\Delta}^{\ell\Delta}\lp\sigma(t,r,x_r)-\sigma(s,r,x_r)\rp dg_r\right\vert.
\end{equation*}
Using \cite[Lemma A.2]{BR}, we get
\begin{align*}
&\left|D_{(\ell-1)\Delta+}^\alpha\right. \left.\lc \sigma(t,\cdot,x_\cdot)-\sigma(s,\cdot,x_\cdot)\rc(r)\right|\\
& \quad\leq \frac{K(t-s)}{\Gamma(1-\alpha)}\lp\frac{1}{(r-(\ell-1)\Delta)^\alpha}+\alpha\int_{(\ell-1)\Delta}^r \frac{\lp (r-u)^\beta+|x_r-x_u|\rp}{(r-u)^{\alpha+1}}du\rp\\
&\quad \leq C_{\alpha,\beta}\frac{(t-s)}{(r-(\ell-1)\Delta)^{\alpha}} \lp 1+(r-(\ell-1)\Delta)^{\beta}+(r-(\ell-1)\Delta)^{1-\alpha}\|x\|_{(\ell-1)\Delta,\ell\Delta,1-\alpha}\rp.
\end{align*}
Then, by the estimate in \cite[(3.5)]{HN}, we obtain 
\begin{equation} \label{eq:cotaC1}
C_1^i\leq C_{\alpha,\beta} \|g\|_{1-\alpha} (t-s) \sum_{\ell=1}^i
\lp \Delta^{1-\alpha}
+\Delta^{1+\beta-\alpha}+\Delta^{2-2\alpha}\|x\|_{(\ell-1)\Delta,\ell\Delta,1-\alpha}\rp.
\end{equation}
Similarly, for the term $C_2^i$ we obtain
\begin{equation} \label{eq:cotaC2}
\begin{split}
C_2^i&\leq C_{\alpha,\beta}\|g\|_{1-\alpha} (t-s) \int_{i\Delta}^s \frac{1}{(r-i\Delta)^\alpha}\lp 1+(r-i\Delta)^\beta+(r-i\Delta)^{1-\alpha}\|x\|_{i\Delta,s,1-\alpha} \rp dr \\
&\leq C_{\alpha,\beta}\|g\|_{1-\alpha} (t-s)\lp (s-i\Delta)^{1-\alpha}+(s-i\Delta)^{1+\beta-\alpha}+(s-i\Delta)^{2-2\alpha}\|x\|_{i\Delta,s,1-\alpha}\rp.
\end{split}
\end{equation}
Hence, from \eqref{eq:cotaA}, \eqref{eq:cotaB}, \eqref{eq:cotaD}, \eqref{eq:cotaC1} and \eqref{eq:cotaC2}, and using the fact that $\Delta\leq 1$, $t-s\leq \Delta$ and $i\Delta\leq T$, we obtain
\begin{align*}
\frac{|x_t-x_s|}{(t-s)^{1-\alpha}}&\leq Ls(t-s)^{\alpha}+L_0(t-s)^{\alpha}+B_{0,\alpha}\nonumber\\
&\qquad +C_{\alpha,\beta}\|g\|_{1-\alpha}\Bigg[ (t-s)^{\alpha} \sum_{\ell=1}^i \lp\Delta^{1-\alpha} +\Delta^{1+\beta-\alpha}+\Delta^{2-2\alpha}\|x\|_{(\ell-1)\Delta,\ell\Delta,1-\alpha}\rp  \nonumber\\
&\qquad +(t-s)^{\alpha}\lp (s-i\Delta)^{1-\alpha}+(s-i\Delta)^{1+\beta-\alpha}+(s-i\Delta)^{2-2\alpha}\|x\|_{i\Delta,s,1-\alpha}\rp \nonumber \\
&\qquad +(t-s)^{\beta}+\|\sigma\|_\infty + \|x\|_{s,t,1-\alpha}(t-s)^{1-\alpha}\Bigg]\nonumber\\
&\leq  LT+L_0+B_{0,\alpha}+C_{\alpha,\beta}\|g\|_{1-\alpha}\bigg[ T+1+\|\sigma\|_\infty\\
&\qquad +\Delta^{2-\alpha}\sum_{\ell=1}^{i} \|x\|_{(\ell-1)\Delta,\ell\Delta,1-\alpha}+\Delta^{1-\alpha}\|x\|_{i\Delta,(i+1)\Delta,1-\alpha}\bigg].\nonumber
\end{align*}
Choosing $\Delta$ such that
\begin{equation} \label{d6}
\Delta^{1-\alpha} \leq \frac{1}{2C_{\alpha,\beta}\|g\|_{1-\alpha}},
\end{equation}
we obtain that 
\begin{eqnarray} \label{xnorm}
\|x\|_{i\Delta,(i+1)\Delta,1-\alpha}\leq A_1+A_2\Delta^{2-\alpha}\sum_{\ell=1}^{i} \|x\|_{(\ell-1)\Delta,\ell\Delta,1-\alpha},
\end{eqnarray}
where
\begin{eqnarray*}
A_1&=&  2(  LT+L_0+B_{0,\alpha}+
C_{\alpha,\beta}\|g\|_{1-\alpha}(T+1+\|\sigma\|_\infty)),\\
A_2&=& 2C_{\alpha,\beta}\|g\|_{1-\alpha}.
\end{eqnarray*} 

\noindent {\it Step 3.} We now use an induction argument in order to show that for all $i\geq 0$, $$\Delta^{1-\alpha} \|x\|_{i\Delta,(i+1)\Delta,1-\alpha}\leq 1.$$
For $i=0$ it is proved in Step 1. 
Assuming that it is true up to $i-1$ and using (\ref{xnorm}), we get that
\begin{equation*}
\Delta^{1-\alpha} \|x\|_{i\Delta,(i+1)\Delta,1-\alpha}\leq A_1\Delta^{1-\alpha}+A_2\Delta^{3-2\alpha}\sum_{\ell=1}^i\|x\|_{(\ell-1)\Delta,\ell \Delta,1-\alpha}\leq  \Delta^{1-\alpha}(A_1+A_2T).
\end{equation*}
Finally, it suffices to choose $\Delta$ such that
\begin{equation} \label{b1}
\Delta^{1-\alpha}\leq \frac{1}{A_1+A_2T},
\end{equation}
to conclude the desired claim.

Therefore, we have that
\begin{equation}  \label{b2}
\|x\|_{i\Delta,(i+1)\Delta,\infty}\leq|x_{i\Delta}|+\Delta^{1-\alpha}  \|x\|_{i\Delta,(i+1)\Delta,1-\alpha}\leq |x_{i\Delta}|+1. 
\end{equation}
Applying this inequality recursively, we conclude that 
\[\sup_{0 \leq t \leq T}\vert x_t \vert \leq\sup_{0 \leq t \leq (n-1) \Delta} \vert x_t \vert +1\leq \cdots\leq\vert  x_0\vert +n,\]
and the desired bound follows choosing $\Delta$ such that
$$
\Delta
= \frac{1}{(4(  L(T\vee 1)+L_0+B_{0,\alpha}+
C_{\alpha,\beta}\|g\|_{1-\alpha}(T+1+\|\sigma\|_\infty)))^{1/(1-\alpha)}} \wedge 1 \wedge T,
$$
where $C_{\alpha, \beta}$ is such that (\ref{d1}), (\ref{a2}), (\ref{d6}) and (\ref{b1}) hold.
\end{proof}

The next result is an exponential bound for the supremum norm of the solution to (\ref{eq:det}) under more general hypotheses than the previous theorem.
\begin{theorem} \label{th:2}
Assume the hypotheses of Theorem \ref{t:exist} with $\mu=1$. Then, 
there exists a constant $C_{\alpha,\beta}>0$ such that
\begin{equation*} \label{eq:resultth2}
\|x\|_{\infty}\leq \lp|x_0|+1\rp \exp\left( 2T \left( \left( K_{T,\alpha}^{(3)}+K_{T,\alpha,\beta}^{(4)}\|g\|_{1-\alpha}\right)^{1/(1-\alpha)}\vee 1\vee T\right)\right),
\end{equation*}
where $K_{T,\alpha}^{(3)}=6(L_0+L(T+1)+B_{0,\alpha})$, $K_{T,\alpha,\beta}^{(4)}=C_{\alpha,\beta}(T+1)$, and
$L,\,L_0$, and $B_{0,\alpha}$ are as in Theorem \ref{th:1}.
\end{theorem}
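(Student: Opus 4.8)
The plan is to run the same three-step scheme as in the proof of Theorem \ref{th:1}, on the same partition of $[0,T]$ into $n=[T/\Delta]+1$ subintervals $[i\Delta,(i+1)\Delta]$, and to pinpoint the two places where the weaker hypotheses of the present statement change the estimates. The bounds \eqref{eq:cotaA} for $A$, \eqref{eq:cotaC} for $C$, and \eqref{eq:cotaC1}--\eqref{eq:cotaC2} for $C_1^i,\,C_2^i$ use only the Lipschitz and H\"older constants of $\sigma$ and its derivatives, so they are unaffected and carry over verbatim. Only the terms $B$ and $D$ must be reworked, because we now permit $b$ to grow linearly in $x$ (the original {\bf (H2)}(3)) and $\sigma$ to be unbounded.

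First I would redo term $B$: by {\bf (H2)}(3), $\int_s^t|b(t,r,x_r)|\,dr\le L_0\,\|x\|_{i\Delta,(i+1)\Delta,\infty}(t-s)+B_{0,\alpha}(t-s)^{1-\alpha}$, so that \eqref{eq:cotaB} acquires the extra factor $L_0\|x\|_{i\Delta,(i+1)\Delta,\infty}$. Next, term $D$: since $\|\sigma\|_\infty$ is no longer available, I would write $|\sigma(t,r,x_r)|\le|\sigma(t,r,0)|+K|x_r|\le M_0+K\|x\|_{i\Delta,(i+1)\Delta,\infty}$, where $M_0:=\sup_{[0,T]^2}|\sigma(\cdot,\cdot,0)|<\infty$ because $\sigma(\cdot,\cdot,0)$ is continuous on the compact square by {\bf (H1)}(3)--(4), and $M_0$ is absorbed into the generic constants. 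Thus the $\|\sigma\|_\infty$ of \eqref{eq:cotaD} is replaced by $M_0+K\|x\|_{i\Delta,(i+1)\Delta,\infty}$. Carrying both changes through Step 2 and choosing $\Delta$ so that \eqref{d6} holds, the analogue of \eqref{xnorm} becomes, schematically,
\begin{equation*}
\|x\|_{i\Delta,(i+1)\Delta,1-\alpha}\le A_1'+A_3\,\|x\|_{i\Delta,(i+1)\Delta,\infty}+A_2\,\Delta^{2-\alpha}\sum_{\ell=1}^{i}\|x\|_{(\ell-1)\Delta,\ell\Delta,1-\alpha},
\end{equation*}
where $A_2=2C_{\alpha,\beta}\|g\|_{1-\alpha}$ as before, $A_1'$ collects the data constants $L,L_0,B_{0,\alpha},M_0$ together with $C_{\alpha,\beta}\|g\|_{1-\alpha}(T+1)$ (these are of the order of $K_{T,\alpha}^{(3)}$ and $K_{T,\alpha,\beta}^{(4)}\|g\|_{1-\alpha}$), and $A_3=L_0\Delta^{\alpha}+C_{\alpha,\beta}K\|g\|_{1-\alpha}$ is the genuinely new feedback coefficient.

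The heart of the matter is this self-referential term $A_3\|x\|_{i\Delta,(i+1)\Delta,\infty}$: in Theorem \ref{th:1} it was a fixed constant ($\propto\|\sigma\|_\infty$), whereas now the solution feeds back into its own H\"older norm, and this feedback is exactly what upgrades polynomial to exponential growth. To resolve it I would insert \eqref{b2}, $\|x\|_{i\Delta,(i+1)\Delta,\infty}\le|x_{i\Delta}|+\Delta^{1-\alpha}\|x\|_{i\Delta,(i+1)\Delta,1-\alpha}$: the summand $A_3\Delta^{1-\alpha}\|x\|_{i\Delta,(i+1)\Delta,1-\alpha}$ is absorbed on the left by shrinking $\Delta$ until its coefficient is at most $\tfrac12$, while the surviving piece $A_3|x_{i\Delta}|$ is controlled through the telescoping bound $|x_{i\Delta}|\le|x_0|+\sum_{j=0}^{i-1}\Delta^{1-\alpha}\|x\|_{j\Delta,(j+1)\Delta,1-\alpha}$. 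Writing $P_i:=\Delta^{1-\alpha}\|x\|_{i\Delta,(i+1)\Delta,1-\alpha}$, the recursion then collapses to a discrete Gronwall inequality $P_i\le\tilde b+\tilde a\sum_{j=0}^{i-1}P_j$, with $\tilde b\propto\Delta^{1-\alpha}(1+|x_0|)$ and $\tilde a\propto\Delta^{1-\alpha}\bigl(A_3+\|g\|_{1-\alpha}\Delta\bigr)$.

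From here the argument is bookkeeping. The discrete Gronwall lemma gives $P_i\le\tilde b(1+\tilde a)^i\le\tilde b\,e^{\tilde a i}$, hence $\|x\|_{i\Delta,(i+1)\Delta,\infty}\le|x_{i\Delta}|+P_i\le|x_0|+\sum_{j=0}^{i}P_j$ is bounded by a multiple of $(|x_0|+1)e^{\tilde a i}$; maximizing over $0\le i\le n-1$ and using $n=[T/\Delta]+1$ turns $e^{\tilde a n}$ into the claimed factor $\exp\bigl(2T((K_{T,\alpha}^{(3)}+K_{T,\alpha,\beta}^{(4)}\|g\|_{1-\alpha})^{1/(1-\alpha)}\vee1\vee T)\bigr)$ once $\Delta$ is fixed as the reciprocal $(1-\alpha)$-th power occurring there (indeed $\tilde a\,n\sim\Delta^{1-\alpha}\|g\|_{1-\alpha}\cdot T\Delta^{-1}=T\Delta^{-\alpha}\|g\|_{1-\alpha}$, which with $\Delta^{-(1-\alpha)}\sim\|g\|_{1-\alpha}$ produces the exponent $\sim T\|g\|_{1-\alpha}^{1/(1-\alpha)}$). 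The main obstacle is precisely this final calibration: one must choose a single $\Delta$ that simultaneously makes \eqref{d6} hold, renders the coefficient of $\|x\|_{i\Delta,(i+1)\Delta,1-\alpha}$ at most $\tfrac12$, and matches the target exponent, while keeping track of how the feedback coefficient $A_3$ splits between the Gronwall rate $\tilde a$ and the data $\tilde b$.
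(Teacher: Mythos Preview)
Your proposal is correct and follows the same overall scheme as the paper: partition $[0,T]$, rework only the bounds for $B$ and $D$ (the paper's estimates \eqref{b} and \eqref{d} are precisely your modified bounds, with your $M_0+K\|x\|_{i\Delta,(i+1)\Delta,\infty}$ absorbed into $C_{\alpha,\beta}(1+\|x\|_{s,t,\infty})$), and then iterate across blocks. The one organizational difference is in how the recursion is closed. You telescope $|x_{i\Delta}|\le|x_0|+\sum_{j<i}P_j$ and run a discrete Gronwall inequality directly on the block seminorms $P_i=\Delta^{1-\alpha}\|x\|_{i\Delta,(i+1)\Delta,1-\alpha}$; the paper instead first proves by induction that $\Delta^{1-\alpha}\|x\|_{i\Delta,(i+1)\Delta,1-\alpha}\le1+\|x\|_{0,(i+1)\Delta,\infty}$ (the analogue of Step~3 in Theorem~\ref{th:1}, now with the feedback term), substitutes this back into \eqref{gp3}, and iterates on the running supremum via $\sup_{[0,(i+1)\Delta]}|x|\le K_1\sup_{[0,i\Delta]}|x|+K_2$. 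Both routes produce the same exponential bound with the same final calibration of $\Delta$; your Gronwall packaging is a bit more streamlined, while the paper's two-layer iteration (induction on seminorms, then geometric iteration on suprema) makes the constants $K_1,K_2$ and the verification of the target exponent slightly more explicit.
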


\begin{proof}
The proof follows similarly as the proof of Theorem \ref{th:1}.
We divide the interval $[0,T]$ into $n=[T/\Delta]+1$ subintervals, where $\Delta \leq 1$ will be chosen below.
\vskip 4pt
\noindent
{\it Step 1.} We start bounding $\|x\|_{0,\Delta,1-\alpha}$.
We can use the same bound for
$|x_t-x_s|$ obtained in \eqref{eq:dec1}. Then, terms $A$  and $C$ can be bounded as in \eqref{eq:cotaA} and \eqref{eq:cotaC} respectively. 
For term $B$, using {\bf (H2)}(3), we get that
\begin{equation} \label{b}
B\leq L_0(t-s)\|x\|_{s,t,\infty}+B_{0,\alpha}(t-s)^{1-\alpha}.
\end{equation}
For term $D$, we obtain
\begin{equation} \label{d}
D\leq C_{\alpha,\beta}\|g\|_{1-\alpha} (t-s)^{1-\alpha}\lc \|x\|_{s,t,\infty}+(t-s)^{\beta}+(t-s)^{1-\alpha}\|x\|_{s,t,1-\alpha}\rc.
\end{equation}
Thus, we get that
\begin{eqnarray*} 
\frac{|x_t-x_s|}{(t-s)^{1-\alpha}}&\leq& Ls(t-s)^{\alpha}+B_{0,\alpha}+\|x\|_{s,t,\infty}\lc L_0(t-s)^{\alpha}+C_{\alpha,\beta}\|g\|_{1-\alpha}\rc\nonumber\\&&+C_{\alpha,\beta}\|g\|_{1-\alpha} (t-s)^{\alpha} \lc s^{1-\alpha}+s^{1+\beta-\alpha}+s^{2-2\alpha} \|x\|_{0,s,1-\alpha}\rc\nonumber\\
&&+C_{\alpha,\beta}\|g\|_{1-\alpha}\lc (t-s)^{\beta}+(t-s)^{1-\alpha}\|x\|_{s,t,1-\alpha}\rc.\nonumber
\end{eqnarray*}
Hence,  as $\Delta \leq 1$,
\begin{eqnarray*} 
\|x\|_{0,\Delta,1-\alpha}\leq B_0 +B_1\|x\|_{0,\Delta,\infty}+B_2\|x\|_{0,\Delta,1-\alpha},
\end{eqnarray*}
where
\begin{eqnarray*}
B_0&=&L+B_{0,\alpha}+ C_{\alpha,\beta}\|g\|_{1-\alpha},\\
B_1&=& L_0+C_{\alpha,\beta}\|g\|_{1-\alpha},\\
B_2&=&  \Delta^{1-\alpha} C_{\alpha,\beta}\|g\|_{1-\alpha}.
\end{eqnarray*}
Thus,
\begin{equation} \label{gp}
\|x\|_{0,\Delta,1-\alpha}\leq B_0 (1-B_2)^{-1}+B_1 (1-B_2)^{-1}\|x\|_{0,\Delta,\infty}.
\end{equation}
Therefore, using the fact that
$$
\sup_{t\in[0,\Delta]}|x_t|\leq |x_0|+\|x\|_{0,\Delta,1-\alpha}\Delta^{1-\alpha},
$$
we conclude that
\begin{equation} \label{eq:cotainfinit3}
\sup_{t\in[0,\Delta]}|x_t|
\leq B_3^{-1}|x_0|+B_3^{-1}B_0 (1-B_2)^{-1}\Delta^{1-\alpha},
\end{equation}
where $B_3=1-B_1 (1-B_2)^{-1}\Delta^{1-\alpha}$.
\vskip 4pt
\noindent
{\it Step 2.} We next study $\|x\|_{i\Delta,(i+1)\Delta,\beta}$, for $i \geq 0$.
For $s,\,t\in[i\Delta,(i+1)\Delta]$, $s<t$,
$|x_t-x_s|$ can be bounded as in \eqref{eq:dec2}. Then using  \eqref{eq:cotaA}, \eqref{eq:cotaC1}, \eqref{eq:cotaC2}, \eqref{b}, and \eqref{d}, we get that
\begin{align*}
\frac{|x_t-x_s|}{(t-s)^{1-\alpha}}&\leq Ls(t-s)^{\alpha}+L_0(t-s)^{\alpha}\|x\|_{s,t,\infty}+B_{0,\alpha}\nonumber\\
&\qquad +C_{\alpha,\beta}\|g\|_{1-\alpha}\Bigg[ (t-s)^{\alpha} \sum_{\ell=1}^i \lp\Delta^{1-\alpha} +\Delta^{1+\beta-\alpha}+\Delta^{2-2\alpha}\|x\|_{(\ell-1)\Delta,\ell\Delta,1-\alpha}\rp  \nonumber\\
&\qquad +(t-s)^{\alpha}\lp (s-i\Delta)^{1-\alpha}+(s-i\Delta)^{1+\beta-\alpha}+(s-i\Delta)^{2-2\alpha}\|x\|_{i\Delta,s,1-\alpha}\rp \nonumber \\
&\qquad +(t-s)^{\beta}+\|x\|_{s,t,\infty} + \|x\|_{s,t,1-\alpha}(t-s)^{1-\alpha}\Bigg]\\
&\leq  LT+L_0\|x\|_{i\Delta,(i+1)\Delta,\infty}+B_{0,\alpha}+C_{\alpha,\beta}\|g\|_{1-\alpha}\bigg[ T+1+\|x\|_{i\Delta,(i+1)\Delta,\infty}\\
&\qquad+\Delta^{2-\alpha}\sum_{\ell=1}^{i} \|x\|_{(\ell-1)\Delta,\ell\Delta,1-\alpha}+\Delta^{1-\alpha}\|x\|_{i\Delta,(i+1)\Delta,1-\alpha}\bigg].\nonumber
\end{align*}
Therefore, we obtain that
\begin{equation} \label{gp2}
\|x\|_{i\Delta,(i+1)\Delta,1-\alpha}\leq C_0^{-1} \lc C_1+C_2\|x\|_{i\Delta,(i+1)\Delta,\infty}+C_3 \Delta^{2-\alpha}\sum_{\ell=1}^i\|x\|_{(\ell-1)\Delta,\ell\Delta,1-\alpha}\rc,
\end{equation}
where
\begin{eqnarray*}
C_0&=&1-C_{\alpha,\beta}\|g\|_{1-\alpha}\Delta^{1-\alpha},\\
C_1&=& LT+B_{0,\alpha}+C_{\alpha,\beta}\|g\|_{1-\alpha} \lc T+1\rc,\\
C_2&=& L_0+C_{\alpha,\beta}\|g\|_{1-\alpha},\\
C_3&=& C_{\alpha,\beta}\|g\|_{1-\alpha}.
\end{eqnarray*} 
Thus,
\begin{equation} \label{gp3}
\|x\|_{i\Delta,(i+1)\Delta,\infty}\leq C_4^{-1}|x_{i\Delta}|+C_0^{-1}C_4^{-1}\Delta^{1-\alpha}\big( C_1+C_3\Delta^{2-\alpha}\sum_{\ell=1}^i\|x\|_{(\ell-1)\Delta,\ell\Delta,1-\alpha}\big),
\end{equation}
where $C_4=1-C_0^{-1}C_2\Delta^{1-\alpha}$.

We next show by induction that for all $i\geq 0$, $$\Delta^{1-\alpha} \|x\|_{i\Delta,(i+1)\Delta,1-\alpha}\leq 1+\|x\|_{0,(i+1)\Delta,\infty}.$$
For $i=0$ it is proved in (\ref{gp}) that
\begin{equation*} 
\|x\|_{0,\Delta,1-\alpha}\leq B_0 (1-B_2)^{-1}+B_1 (1-B_2)^{-1}\|x\|_{0,\Delta,\infty}.
\end{equation*}
Then, it suffices to choose $\Delta$ such that $B_2 \leq \frac12$ and
$$
\Delta^{1-\alpha}\leq \frac12 \left(\frac{1}{B_0} \wedge \frac{1}{B_1}\right),
$$
to conclude the claim for $i=0$.

Assuming that it is true up to $i-1$ and using (\ref{gp2}), we get that
\begin{equation*}
 \|x\|_{i\Delta,(i+1)\Delta,1-\alpha}\leq C_0^{-1} \left[ C_1+C_3T+\|x\|_{0,(i+1)\Delta,\infty}(C_2+C_3T)\right].
\end{equation*}
Finally, it suffices to choose $\Delta$ such that $C_0\geq 2$ and
$$
\Delta^{1-\alpha}\leq \frac{1}{C_1+C_3T} \wedge \frac{1}{C_2+C_3T},
$$
to conclude the desired claim.

By (\ref{gp3}), we conclude that
\begin{equation} \label{eq:normst2}
\|x\|_{i\Delta,(i+1)\Delta,\infty}\leq C_4^{-1}|x_{i\Delta}|+C_0^{-1}C_4^{-1}\Delta^{1-\alpha}\lp C_1+T C_3(1+\|x\|_{0,i\Delta,\infty})\rp.
\end{equation}
\vskip 4pt
\noindent
{\it Step 3.} 
Using \eqref{eq:normst2}, we get that
\begin{align*}
&\sup_{0\leq t\leq (i+1)\Delta} |x_t|\leq  \sup_{0\leq t\leq i\Delta} |x_t| + \sup_{i \Delta \leq t\leq (i+1)\Delta} |x_t|\\
&\quad\le  \sup_{0\leq t\leq i\Delta} |x_t| +C_4^{-1}|x_{i\Delta}|+C_0^{-1}C_4^{-1}\Delta^{1-\alpha} \lp C_1+T C_3(1+\|x\|_{0,i\Delta,\infty})\rp\\
&\quad\le  K_1 \sup_{0\leq t\leq i\Delta}|x_t|+K_2,
\end{align*}
where
\begin{eqnarray*}
K_1&=& 1+C_4^{-1}(1+ T C_0^{-1} C_3 \Delta^{1-\alpha}),\\
K_2&=&C_0^{-1}C_4^{-1}(C_1+TC_3)\Delta^{1-\alpha}.
\end{eqnarray*}
Iterating, we obtain that
\[ \sup_{0 \leq t \leq T}\vert x_t \vert \leq K_1\sup_{0 \leq t \leq (n-1) \Delta} \vert x_t \vert +K_2\leq \cdots\leq K_1^{n-1} \sup_{0 \leq t \leq \Delta}\vert x_t \vert+K_2\sum_{i=0}^{n-2}K_1^i.\]

We next choose $\Delta$ such that $C_2\Delta^{1-\alpha}\leq \frac13$ and $C_0^{-1}\leq \frac32$.
Then, $C_4^{-1}\leq 2$. Moreover, we choose $\Delta$  such that 
$T C_3C_0^{-1} \Delta^{1-\alpha}\leq \frac16$. This  implies that $K_1\leq \frac{10}{3}$. Thus, 
$$\sum_{i=0}^{n-2}K_1^i \leq \sum_{i=0}^{n-2}\lp\frac{10}{3}\rp^i=\frac37\lp\frac{10}{3}\rp^{n-1}\leq \;\frac37 e^{2(n-1)}.$$
In order to bound $K_2$, it suffices to choose $\Delta$ such  that $C_1\Delta^{1-\alpha}\leq \frac13$. Then, we easily obtain that $K_2\leq 1$.
We finally bound $\sup_{0 \leq t \leq \Delta}\vert x_t \vert$ using  \eqref{eq:cotainfinit3}. Again  we choose $\Delta$ such that $(1-B_2)^{-1}\leq\frac32$ and $B_1\Delta^{1-\alpha}\leq \frac{1}{3}$ so that $B_3^{-1}\leq 2$. We also choose $\Delta$ such that $\Delta^{1-\alpha} B_0\leq \frac{1}{4}$ so
that \[\sup_{0 \leq t \leq \Delta} \vert x_t \vert\leq 2|x_0|+2\cdot \frac{3}{2}\cdot\frac14=2|x_0|+\frac{3}{4}<2|x_0|+1.\]
Finally, we conclude that
$$
\sup_{t\in[0,T]} |x_t|\leq (2|x_0|+1)e^{2(n-1)}\leq (|x_0|+1)e^{2\left[\frac{T}{\Delta}\right]},
$$
which implies the desired estimate choosing $\Delta$ such that 
\[\Delta= \frac{1}{(C_{\alpha,\beta}\|g\|_{1-\alpha}(1+T)+6(L_0+L(1+T)+B_{0,\alpha}))^{1/(1-\alpha)}} \wedge 1 \wedge T.\]
\end{proof}

The next result provides a supremum norm estimate of the solution $z_{t}$ of the following system of equations 
\begin{align} \label{z} \nonumber 
x_t=& x_0+\displaystyle\int_0^t b(t,r,x_r)dr+\displaystyle\int_0^t \sigma(t,r,x_r)dg_r\\ 
z_t=& w_t+\displaystyle\int_0^t h(t,r,x_r)z_rdr+\displaystyle\int_0^t f(t,r,x_r)z_rdg_r,
\end{align}
where $g$ belongs to $W_2^{1-\alpha}(0,T; \R^m)$, $w$ belongs to $C^{1-\alpha}(0,T;\R^d)$, $b:[0,T]^2\times \R^d \rightarrow \R^d$, $\sigma:[0,T]^2\times \R^d \rightarrow \R^d\times \R^m$, $h:[0,T]^2\times \R^d \rightarrow \R^d\times \R^d$, and $f:[0,T]^2\times \R^d \rightarrow \R^{d^2}\times \R^{m}$ are measurable functions,  and $x_0\in \R^d$. 
\vskip 5pt
\noindent
We will use the following hypotheses on $h,f$ and $w$:
\[
\arraycolsep=1.4pt\def\arraystretch{1.5}{\textbf{(H3)}}\quad\left\{
\begin{array}{l}
h \;\textrm{is Lipschitz continuous with respect to}\; t \textrm{ and bounded}.\\
f\; \textrm{is bounded and satisfies {\textbf{(H1)}}}. \\
w\; \textrm{is Lipschitz continuous and bounded}.
\end{array}\right.\]
\vskip 5pt
\begin{theorem} \label{th:3}
Assume that $b$ and $\sigma$ satisfy  the hypotheses of Theorem \ref{th:1} and that $h, f$ and $w$ satisfy hypothesis {\upshape\bfseries(H3)}. 
Then there exists a unique solution 
$z \in C^{1-\alpha}(0,T;\R^d)$ to equation \eqref{z}. Moreover, there exists a constant $C_{\alpha,\beta}>0$ such that 
\[\|z\|_\infty \leq 2\left(1+\|w\|_\infty\right)\exp\left(T\left(K_{T,\alpha}^{(5)}+K_{T,\alpha,\beta}^{(6)}\|g\|_{1-\alpha} \right)^{1/(1-\alpha)}\vee 1 \vee T \right),\]
where $K_{T,\alpha}^{(5)}=16\left(K+\|h\|_\infty+L+L_0+B_{0,\alpha}\right)e^T(T+1)$ and \\$K_{T,\alpha,\beta}^{(6)}=C_{\alpha,\beta}\left(\|f\|_\infty+\|\sigma\|_\infty+1\right)e^T (T+1)$.
\end{theorem}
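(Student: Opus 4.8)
The plan is to solve the system \eqref{z} sequentially and then to transcribe, almost verbatim, the interval-splitting and induction machinery of Theorems \ref{th:1} and \ref{th:2}. Since $b,\sigma$ satisfy the hypotheses of Theorem \ref{th:1}, the first equation in \eqref{z} has a unique solution $x\in C^{1-\alpha}(0,T;\R^d)$ by Theorem \ref{t:exist}, with $\|x\|_\infty$ controlled by the polynomial bound \eqref{eq:result1}; moreover Step 3 of the proof of Theorem \ref{th:1} yields $\Delta^{1-\alpha}\|x\|_{i\Delta,(i+1)\Delta,1-\alpha}\le 1$ on each block, which I will use repeatedly. With $x$ frozen, the second equation is \emph{linear} in $z$ with coefficients $\widetilde h(t,r):=h(t,r,x_r)$ and $\widetilde f(t,r):=f(t,r,x_r)$, which by \textbf{(H3)} are bounded and, since $x\in C^{1-\alpha}$, inherit enough regularity in $(t,r)$ for the representation \eqref{eq:forpart} to apply. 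Existence and uniqueness of $z\in C^{1-\alpha}(0,T;\R^d)$ then follow from a contraction argument on a short interval $[0,\Delta]$ — the contraction constant being exactly the factor $C_{\alpha,\beta}\|g\|_{1-\alpha}\Delta^{1-\alpha}$ produced by the estimates below — patched over $[0,T]$; the linearity of the equation rules out any blow-up.

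For the a priori bound I would follow Theorem \ref{th:2} step by step, since the linearity of the $z$-equation forces an exponential-in-$T$ estimate. On a generic block $[i\Delta,(i+1)\Delta]$, for $s<t$ I decompose $|z_t-z_s|$ exactly as in \eqref{eq:dec2}, with an extra term $|w_t-w_s|$ from the free term, which is $\le \|w\|_\infty$-order after dividing by $(t-s)^{1-\alpha}$ because $w$ is Lipschitz. The genuinely new feature is the factor $z_r$ in the integrands: the analogues of terms $B$ and $D$ are $\int_s^t \widetilde h(t,r)z_r\,dr$ and $\int_s^t \widetilde f(t,r)z_r\,dg_r$, and those of $C_1^i,C_2^i$ carry increments of $\widetilde f$ times $z$. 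Boundedness of $\widetilde h,\widetilde f$ turns the Lebesgue terms into $\|h\|_\infty\|z\|_{s,t,\infty}(t-s)$ and produces $\|z\|_{0,s,\infty}$ contributions exactly as the $L_0\|x\|_{s,t,\infty}$ term appeared in \eqref{b}. For the $g$-integrals I apply \cite[Lemma A.2]{BR} to the \emph{product} $\widetilde f(t,\cdot)z_\cdot$ and split $\widetilde f(t,r)z_r-\widetilde f(t,u)z_u=(\widetilde f(t,r)-\widetilde f(t,u))z_r+\widetilde f(t,u)(z_r-z_u)$, the first piece (controlled via \textbf{(H1)} for $f$ together with $\Delta^{1-\alpha}\|x\|_{(\ell-1)\Delta,\ell\Delta,1-\alpha}\le 1$) being proportional to $\|z\|_{s,t,\infty}$, the second proportional to $\|f\|_\infty\|z\|_{s,t,1-\alpha}$.

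This delivers the analogue of \eqref{gp2}--\eqref{gp3}: $\|z\|_{i\Delta,(i+1)\Delta,1-\alpha}$ is bounded by a constant, plus a multiple of $\|z\|_{i\Delta,(i+1)\Delta,\infty}$, plus $C_{\alpha,\beta}\|g\|_{1-\alpha}\Delta^{2-\alpha}\sum_{\ell=1}^i\|z\|_{(\ell-1)\Delta,\ell\Delta,1-\alpha}$, the self-referential $\|z\|_{i\Delta,(i+1)\Delta,1-\alpha}$ term being absorbed once $\Delta^{1-\alpha}\le (2C_{\alpha,\beta}\|g\|_{1-\alpha})^{-1}$. An induction identical to Step 3 of Theorem \ref{th:2} gives $\Delta^{1-\alpha}\|z\|_{i\Delta,(i+1)\Delta,1-\alpha}\le 1+\|z\|_{0,(i+1)\Delta,\infty}$, whence $\|z\|_{i\Delta,(i+1)\Delta,\infty}\le K_1\|z\|_{0,i\Delta,\infty}+K_2$, and iterating produces the geometric factor $K_1^{\,n-1}$ with $n=[T/\Delta]+1$, i.e.\ the exponential in the statement, the prefactor $2(1+\|w\|_\infty)$ arising from the free term in place of the $|x_0|+1$ of Theorem \ref{th:2}. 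The main obstacle is precisely the fractional-derivative estimate of the product $\widetilde f(t,\cdot)z_\cdot$: one must cleanly separate the $\|z\|_{\cdot,\cdot,\infty}$ part, which drives the exponential growth through the iteration, from the $\|z\|_{\cdot,\cdot,1-\alpha}$ part, which must be absorbed into the left-hand side via the smallness of $\Delta$, all while using the already-established H\"older control of $x$ to handle the increments $\widetilde f(t,r)-\widetilde f(t,u)=f(t,r,x_r)-f(t,u,x_u)$. Keeping these three norms correctly tracked through \cite[Lemma A.2]{BR} is where the bookkeeping is heaviest; everything else is a routine adaptation of Theorem \ref{th:2}.
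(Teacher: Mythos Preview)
Your proposal is correct and follows essentially the same route as the paper: interval splitting, fractional-derivative estimates of the product $f(t,\cdot,x_\cdot)z_\cdot$ via the split $(f(t,r,x_r)-f(t,u,x_u))z_r+f(t,u,x_u)(z_r-z_u)$, use of the blockwise control $\tilde\Delta^{1-\alpha}\|x\|_{\ell\tilde\Delta,(\ell+1)\tilde\Delta,1-\alpha}\le 1$ from Theorem~\ref{th:1}, and an iteration over the blocks producing the exponential. The only organisational difference is that the paper, instead of borrowing the induction of Theorem~\ref{th:2}, iterates the resulting recurrence \eqref{aux3} directly (giving factors $(1+2E_4)^{i-1}\le e^T$, which is where the $e^T$ in the constants $K^{(5)}$, $K^{(6)}$ originates), but this is bookkeeping rather than a different idea; one minor slip is that $|w_t-w_s|/(t-s)^{1-\alpha}$ is bounded by the Lipschitz constant $K$ of $w$, not by $\|w\|_\infty$.
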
 
\begin{proof}
The  existence and uniqueness of the solution follows similarly as \cite[Theorem 5.1]{N-R}.
We next prove the estimate of the supremum norm of the  solution.
We divide the interval $[0,T]$ into $n=[T/\tilde{\Delta}]+1$ subintervals, where $\tilde{\Delta} \leq 1$ will be chosen below.
\vskip 4pt
\noindent
{\it{Step 1.}} We first estimate $\|z\|_{0,\tilde{\Delta},\infty}$. Let $t,t'\in [0,\tilde{\Delta}]$ with $t<t'$. We write
\begin{align*}
\left|z_{t'}-z_t\right|&\leq \left|w_{t'}-w_t\right|+\left|\int_t^{t'} h(t',r,x_r)z_rdr\right|+\left|\int_0^t \left(h(t',r,x_r)-h(t,r,x_r)\right)z_rdr\right|\\
&\qquad \qquad\qquad   +\left|\int_t^{t'} f(t',r,x_r)z_rdg_r\right|+\left|\int_0^t \left(f(t',r,x_r)-f(t,r,x_r)\right)z_rdg_r\right|\\
&=E+F+G+H+I.
\end{align*}
The first three terms are easily bounded as
\begin{align*}
E&\leq K(t'-t), \\
F &\leq \|h\|_\infty (t'-t) \|z\|_{t,t',\infty},\text{ and} \\
G &\leq L(t'-t) t\|z\|_{t,t',\infty}. 
\end{align*}
We next bound $H$ and $I$. Using \eqref{eq:forpart} and the estimate in \cite[(3.5)]{HN}, we  get 
\[H\leq K\|g\|_{1-\alpha} \int_t^{t'} \left|D_{t+}^\alpha\left[f(t',\cdot,x_\cdot)z_\cdot\right](r)\right|dr,\]
where
\begin{align*}
&\vert D_{t+}^\alpha\left[f(t',\cdot,x_\cdot)z_\cdot\right](r)\vert \\
&\qquad \leq \frac{1}{\Gamma(1-\alpha)}\left(\frac{|f(t',r,x_r)z_r|}{(r-t)^\alpha}+\alpha \int_t^r \frac{|f(t',r,x_r)z_r-f(t',u,x_u)z_u|}{(r-u)^{\alpha+1}}du\right)\\
&\qquad \leq  C_\alpha (H_1+H_2),
\end{align*}
\[H_1\leq C_{\alpha}\|f\|_\infty \|z\|_{t,t',\infty} (r-t)^{-\alpha},\]
and
\begin{eqnarray*}
H_2&\leq& C_{\alpha,\beta} \|z\|_{t,t',\infty} (r-t)^{\beta-\alpha}+ C_{\alpha,\beta}\|z\|_{t,t',\infty}\|x\|_{t,t',1-\alpha}(r-t)^{1-2\alpha}\\
&&+C_{\alpha,\beta}\|f\|_\infty\|z\|_{t,t',1-\alpha}(r-t)^{1-2\alpha}.
\end{eqnarray*}
Therefore, we obtain
\begin{eqnarray*}
H&\leq& C_{\alpha,\beta} (t'-t)^{1-\alpha} \|g\|_{1-\alpha} \left[ \|z\|_{t,t',\infty}
\left(\|f\|_\infty+(t'-t)^\beta+\|x\|_{t,t',1-\alpha}(t'-t)^{1-\alpha}\right)\right. \nonumber\\
&&\hspace{5mm}\left.+\|f\|_\infty\|z \|_{t,t',1-\alpha}\|(t'-t)^{1-\alpha}\right].
\end{eqnarray*}
Similarly,
\begin{equation*} 
I\leq C_{\alpha,\beta}\|g\|_{1-\alpha} (t'-t) t^{1-\alpha} \left[ \|z\|_{t,t',\infty}\left(1+t^\beta+
\|x\|_{t,t',1-\alpha} t^{1-\alpha}\right)+\|z\|_{t,t',1-\alpha}t^{1-\alpha}\right].
\end{equation*}
Hence, we conclude that
\begin{eqnarray*}
\frac{|z_{t'}-z_t|}{(t'-t)^{1-\alpha}}&\leq& K+D_1\|z\|_{t,t',\infty}+D_2\|z\|_{t,t',1-\alpha},
\end{eqnarray*}
where
\begin{eqnarray*}
D_1&=& \|h\|_\infty+L+C_{\alpha,\beta}\|g\|_{1-\alpha}\left(\|f\|_\infty+1+\|x\|_{0,\tilde{\Delta},1-\alpha}\tilde{\Delta}^{1-\alpha}\right),\\
D_2 &=& C_{\alpha,\beta} \|g\|_{1-\alpha}\left(\|f\|_\infty+1\right)\tilde{\Delta}^{1-\alpha}.
\end{eqnarray*}
Thus,
\[\|z\|_{0,\tilde{\Delta},1-\alpha}\leq (1-D_2)^{-1}(K+D_1\|z\|_{0,\tilde{\Delta},\infty}).\]
Moreover,
\begin{eqnarray*}
\|z\|_{0,\tilde{\Delta},\infty}
\leq  \|w\|_\infty+\tilde{\Delta}^{1-\alpha}\left[ (1-D_2)^{-1}(K+  D_1 \|z\|_{0,\tilde{\Delta},\infty})\right].
\end{eqnarray*}
Choosing $\tilde{\Delta}$ satisfying (\ref{a2}), we obtain by (\ref{a1}) that $\|x\|_{0,\tilde{\Delta},1-\alpha}\tilde{\Delta}^{1-\alpha}\leq \frac12 \leq 1$. We next choose $\tilde{\Delta}$ such that $\tilde{\Delta}^{1-\alpha} K\leq  1$, $\tilde{\Delta}^{1-\alpha} D_1 \leq 1$, and $D_2\leq \frac12$. Then, we obtain that
\begin{equation} \label{w1}
\|z\|_{0,\tilde{\Delta},\infty} \leq 2\|w\|_\infty+1.
\end{equation}
{\it{Step 2.}} We next estimate $\|z\|_{i\tilde{\Delta},(i+1)\tilde{\Delta},\infty}$ for $i=1,\ldots,n$. Fix $t,t'\in [i\tilde{\Delta},(i+1)\tilde{\Delta}]$ with $t<t'$.
Similar bounds can be obtained for the corresponding terms $E,\,F,\,G$ and $H$ as in Step 1. Thus, we just need to bound the term $I^i:= I$, that is,
\begin{equation*}
I^i \leq \sum_{\ell=1}^i\left|\int_{(\ell-1)\tilde{\Delta}}^{\ell\tilde{\Delta}} \left(f(t',r,x)-f(t,r,x_r)\right)z_rdg_r\right|+\left|\int_{i\tilde{\Delta}}^{t} \left(f(t',r,x)-f(t,r,x_r)\right)z_rdg_r\right|.
\end{equation*}
Following the same computations as for $I$, we get
\begin{eqnarray*}
\left|\int_{(\ell-1)\tilde{\Delta}}^{\ell\tilde{\Delta}} \left(f(t',r,x)-f(t,r,x_r)\right)z_rdg_r\right| &\leq& C_{\alpha,\beta}\|g\|_{1-\alpha} (t'-t) \tilde{\Delta} \left[\|z\|_{\ell-1)\tilde{\Delta},\ell\tilde{\Delta},1-\alpha}\tilde{\Delta}^{1-\alpha}\right].\\
&&\hspace{-20mm}\left.+\|z\|_{(\ell-1)\tilde{\Delta},\ell\tilde{\Delta},\infty} \left(1+\tilde{\Delta}^{1-\alpha} + \|x\|_{(\ell-1)\tilde{\Delta},\ell\tilde{\Delta},1-\alpha}\tilde{\Delta}^{1-\alpha}\right)\right].
\end{eqnarray*}
Therefore, the term $I^i$ is bounded by
\begin{align*}
&C_{\alpha,\beta} \|g\|_{1-\alpha} (t'-t) \tilde{\Delta}^{1-\alpha}
\bigg[ \|z\|_{i\tilde{\Delta},(i+1)\tilde{\Delta},\infty}\left(1+\|x\|_{i\tilde{\Delta},(i+1)\tilde{\Delta},1-\alpha}\tilde{\Delta}^{1-\alpha}\right)\\
&+\|z\|_{i\tilde{\Delta},(i+1)\tilde{\Delta},1-\alpha}\tilde{\Delta}^{1-\alpha}\\
&+\sum_{\ell=1}^i \Big[\|z\|_{(\ell-1)\tilde{\Delta},\ell\tilde{\Delta},\infty}\left(1+\|x\|_{(l-1)\tilde{\Delta},\ell\tilde{\Delta},1-\alpha}\tilde{\Delta}^{1-\alpha}\right)+\|z\|_{(\ell-1)\tilde{\Delta},\ell\tilde{\Delta},1-\alpha}\tilde{\Delta}^{1-\alpha}\Big]\bigg]. 
\end{align*}
Hence, we obtain that
\begin{align*}
\|z\|_{i\tilde{\Delta},(i+1)\tilde{\Delta},1-\alpha}&\leq K+E_1\|z\|_{i\tilde{\Delta},(i+1)\tilde{\Delta},1-\alpha} +E_2^i\|z\|_{i\tilde{\Delta},(i+1)\tilde{\Delta},\infty}\\
&\qquad +\sum_{\ell=1}^i \left[E_3^\ell\|z\|_{(\ell-1)\tilde{\Delta},\ell\tilde{\Delta},\infty})+E_4\|z\|_{(\ell-1)\tilde{\Delta},\ell\tilde{\Delta},1-\alpha}\right].
\end{align*}
where
\begin{eqnarray*}
E_1&=&C_{\alpha}\|g\|_{1-\alpha}\left(\|f\|_\infty+1\right)\tilde{\Delta}^{1-\alpha},\\
E_2^i &= &\|h\|_\infty+L+C_{\alpha,\beta}\|g\|_{1-\alpha}\left(\|f\|_\infty+1+\|x\|_{i\tilde{\Delta},(i+1)\tilde{\Delta},1-\alpha}\tilde{\Delta}^{1-\alpha}\right),\\
E_3^\ell &=&  C_{\alpha,\beta}\|g\|_{1-\alpha} \tilde{\Delta} \left(1+\|x\|_{(\ell-1)\tilde{\Delta},\ell\tilde{\Delta},1-\alpha}\tilde{\Delta}^{1-\alpha}\right),\\
E_4 &=&  C_{\alpha,\beta}\|g\|_{1-\alpha} \tilde{\Delta}^{2-\alpha}.
\end{eqnarray*}
Choosing $\tilde{\Delta}$ such that $E_1 \leq \frac12$, we obtain that
\begin{equation} \label{aux3} \begin{split}
\|z\|_{i\tilde{\Delta},(i+1)\tilde{\Delta},1-\alpha}&\leq 2K+2E_2^i\|z\|_{i\tilde{\Delta},(i+1)\tilde{\Delta},\infty}\\
&\qquad +\sum_{\ell=1}^i \left[2 E_3^\ell\|z\|_{(\ell-1)\tilde{\Delta},\ell\tilde{\Delta},\infty})+2E_4\|z\|_{(\ell-1)\tilde{\Delta},\ell\tilde{\Delta},1-\alpha}\right].
\end{split}
\end{equation}
Choosing $\tilde{\Delta}$ satisfying (\ref{b1}), we obtain by the Step 3 in Theorem \ref{th:1} that for all $\ell=1,\ldots,i$, $\|x\|_{\ell\tilde{\Delta},(\ell+1)\tilde{\Delta},1-\alpha}\tilde{\Delta}^{1-\alpha} \leq 1$. Thus,
\begin{eqnarray*}
E_2^i &\leq& E_2:= \|h\|_\infty+L+C_{\alpha,\beta}\|g\|_{1-\alpha}\left(\|f\|_\infty+1\right)\\
E_3^\ell &\leq& E_3:= C_{\alpha,\beta}\|g\|_{1-\alpha} \tilde{\Delta}.
\end{eqnarray*}
Applying expression (\ref{aux3}) recurrently we obtain  that
\begin{align*}
\|z\|_{i\tilde{\Delta},(i+1)\tilde{\Delta},1-\alpha}&\leq 2K(1+2E_4)^{i-1}+ 2E_2 \|z\|_{i\tilde{\Delta},(i+1)\tilde{\Delta},\infty}\\
& + (2E_3+4E_4E_2) \sum_{\ell=1}^{i-1} (1+2E_4)^{\ell-1} \|z\|_{(i-\ell)\tilde{\Delta},(i-(\ell-1))\tilde{\Delta},\infty}.
\end{align*}
This implies that
\begin{eqnarray*}
\|z\|_{i\tilde{\Delta},(i+1)\tilde{\Delta},\infty}&\leq& \left|z_{i\tilde{\Delta}}\right|+ \|z\|_{i\tilde{\Delta},(i+1)\tilde{\Delta},1-\alpha}\tilde{\Delta}^{1-\alpha}\\
&\leq& E_5^{-1}\left|z_{i\tilde{\Delta}}\right|+ K_i + E_5^{-1}(2E_3+4E_4E_2)(1+2E_4)^{i-1} i \|z\|_{0,i\tilde{\Delta},\infty}\tilde{\Delta}^{1-\alpha},
\end{eqnarray*}
where $E_5=1-2E_2\tilde{\Delta}^{1-\alpha}$ and $K_i=E_5^{-1}2E_1(1+2E_4)^{i-1}\tilde{\Delta}^{1-\alpha}$.
\vskip 4pt
\noindent
{\it Step 3.} Using the  result of Step 2 yields that
\begin{equation} \label{w2}
\sup_{t\in[0,(i+1)\tilde{\Delta}]} |z_t|\leq L_i \sup_{t\in[0,i\tilde{\Delta}]} |z_t| +K_i, 
\end{equation}
where $
L_i =  E_5^{-1}\left(1+\tilde{\Delta}^{1-\alpha} (2E_3+4E_4E_2)(1+2E_4)^{i-1} i\right)
$.

We finally bound $L_i$ and $K_i$.
We choose $\tilde{\Delta}$ such that $2E_2\tilde{\Delta}^{1-\alpha}\leq \frac12$, so that $E_5^{-1}\leq 2$. We also choose $\tilde{\Delta}$ such that $2E_4 \leq \tilde{\Delta}$ so that
\begin{equation*}
(1+2E_4)^{i-1}\leq (1+\tilde{\Delta})^{i-1}\leq (1+\tilde{\Delta})^{n-1}\leq (1+\tilde{\Delta})^{T/\tilde{\Delta}}\leq e^{T}.
\end{equation*}
Hence, choosing $\tilde{\Delta}$ such that $4 E_1 e^T \tilde{\Delta}^{1-\alpha} \leq 1$ we conclude that 
$K_i\leq 1.$
Moreover, as $i\tilde{\Delta}\leq T$, we have that
\[L_i\leq 2\left(1+ \left(\tilde{\Delta}^{1-\alpha} C_{\alpha,\beta} \Vert g \Vert_{1-\alpha}+2\tilde{\Delta}^{1-\alpha} E_2\right)e^{T}T\right).\]
We finally choose $\tilde{\Delta}$ such that that  $\tilde{\Delta}^{1-\alpha} C_{\alpha,\beta} \Vert g \Vert_{1-\alpha} e^T T\leq \frac18$ and 
$2\tilde{\Delta}^{1-\alpha} E_2 e^T T \leq \frac18$, so that
$L_i\leq e$.

Iterating (\ref{w2}) and using (\ref{w1}), we conclude that
\begin{equation*} \begin{split}
\sup_{0 \leq t \leq T}\vert z_t \vert &\leq e \sup_{0 \leq t \leq (n-1) \tilde{\Delta}} \vert z_t \vert +1\leq \cdots\leq e^{n-1} \sup_{0 \leq t \leq \tilde{\Delta}}\vert z_t \vert+\sum_{i=0}^{n-2}e^i\\
&\leq 2e^{[T/\tilde{\Delta}]}(\Vert w \Vert_{\infty}+1),
\end{split}
\end{equation*}
which implies  the desired result.
\end{proof}

We end this section by showing the Fr\'echet differentiability of the solution to the deterministic equation (\ref{eq:det}), which extends  \cite[Lemma 3 and Proposition 4]{NS}. 
%We first need the following preliminary estimates.
%\begin{lemma} \label{e1}
%Assume the hypotheses of Theorem \textnormal{\ref{t:exist}}. Let $x$ be the %solution to equation \eqref{eq:det} and let $v\in W_1^{\alpha}(0,T; \R^d)$. 
%\begin{enumerate}
%\item  Consider the $d$-dimensional process $F_t=\sum_{k=1}^d\int_0^{t} %\partial_{x_k} b(t,s,x_s)v^k_s ds$, where $b^i(t,s,\cdot)$ is continuously differentiable in $\R^d$ for all $s,t \in [0,T]$ and $\partial_{x_k} b$ satisfies {\bf (H2)}. Then $F$
%belongs to $C^{1-\alpha}(0,T;\R^d)$ and 
%$$
%\Vert F\Vert_{\alpha, 1}  \leq c_{\alpha,T} \Vert v \Vert_{\alpha, 1}. 
%$$

%\item Consider the $d$-dimensional process $G^i_t=\sum_{k=1}^d\sum_{j=1}^m%%\int_0^{t} \partial_{x_k} \sigma^{i,j}(t,s,x_s)v^k_s dg^j_s$, $i=1,\ldots,d$, where %$\sigma^{i,j}(t,s, \cdot)$
%is continuously differentiable in $\R^d$ for all $s,t \in [0,T]$ and $%\partial_{x_k} \sigma$ satisfies {\bf (H1)}. Then $G$
%belongs to $C^{1-\alpha}(0,T;\R^d)$ and 
%$$
%\Vert G\Vert_{\alpha, 1}  \leq c_{\alpha, T} \Vert v \Vert_{\alpha, 1} \Vert g \Vert_{1-\alpha,2}. 
%$$
%\end{enumerate}
%\end{lemma}

\begin{lemma}\label{lema3}
Assume the hypotheses of Theorem \textnormal{\ref{t:exist}}.
Assume that $b(t,s, \cdot), \sigma(t,s, \cdot)$ belong to $C^3_b$ for all $s,t \in [0,T]$ and that the partial derivatives of $b$ and $\sigma$ satisfy
${\bf (H2)}$ and ${\bf (H1)}$, respectively.
Then the mapping
$$
F:W_2^{1-\alpha}(0,T; \R^m) \times W_1^{\alpha}(0,T; \R^d)
\rightarrow W_1^{\alpha}(0,T; \R^d)
$$
defined by
\begin{equation}
(h,x) \rightarrow F(h,x):=x-x_0-\int_0^{\cdot} b(\cdot,s, x_s) ds-
\int_0^{\cdot} \sigma(\cdot,s,x_s)d(g_s+h_s)
\end{equation}
is Fr\'echet differentiable. Moreover, for any $(h,x)\in W_2^{1-\alpha}(0,T; \R^m) \times W_1^{\alpha}(0,T; \R^d)$, $k \in W_2^{1-\alpha}(0,T; \R^m)$, $v\in W_1^{\alpha}(0,T; \R^d)$, and $i=1,\dots,d$, the Fr\'echet derivatives with respect to $h$ and $x$ are given respectively by
\begin{align} \label{der1}
D_1F(h,x)(k)^i_t&=-\sum_{j=1}^m \int_0^t \sigma^{i,j}(t,s,x_s) dk^j_s, 
\\\label{der2}
D_2F(h,x)(v)^i_t&=v_t^i-\sum_{k=1}^d \int_0^t\partial_{x_k} b^i(t,s,x_s)v_s^k ds- \sum_{k=1}^d \sum_{j=1}^m\int_0^t \partial_{x_k}\sigma^{i,j}(t,s,x_s) 
v_s^k d(g_s^j+h_s^j).
\end{align}
\end{lemma}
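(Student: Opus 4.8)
The plan is to treat the two variables separately, since $F$ is affine in $h$ and only the $x$-dependence is genuinely nonlinear. Before differentiating, I would first record that $F$ is well defined as a map into $W_1^{\alpha}(0,T;\R^d)$: the term $x-x_0$ lies there trivially, while for the integral terms one invokes the a priori bounds on the Lebesgue and generalized Riemann--Stieltjes integrals from \cite{N-R} and \cite{BR}, using that $g+h\in W_2^{1-\alpha}(0,T;\R^m)$ and that the composed maps $s\mapsto b(t,s,x_s)$ and $s\mapsto \sigma(t,s,x_s)$ inherit enough Hölder/Sobolev regularity from the hypotheses on $b,\sigma$ and from $x\in C^{1-\alpha}(0,T;\R^d)$. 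To upgrade the two partial derivatives computed below to joint Fréchet differentiability, I would check that $D_1F$ and $D_2F$ depend continuously on $(h,x)$ and invoke the standard fact that existence and continuity of the partial derivatives imply Fréchet differentiability.

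For the derivative in $h$, observe that $h$ enters $F$ only linearly, through $-\int_0^{\cdot}\sigma(\cdot,s,x_s)\,dh_s$. Hence the increment $F(h+k,x)-F(h,x)$ equals exactly $-\int_0^{\cdot}\sigma(\cdot,s,x_s)\,dk_s$, so the Fréchet derivative $D_1F(h,x)$ is the linear map (\ref{der1}) with identically vanishing remainder. The only point to verify is boundedness from $W_2^{1-\alpha}(0,T;\R^m)$ into $W_1^{\alpha}(0,T;\R^d)$; this follows from the estimate for the generalized Stieltjes integral via (\ref{eq:forpart}), bounding $\|D_1F(h,x)(k)\|_{\alpha,1}$ by $C\,\|k\|_{1-\alpha,2}$ times a suitable norm of $\sigma(\cdot,\cdot,x_\cdot)$.

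The substantive part is the derivative in $x$. I would fix the candidate $D_2F(h,x)$ given by (\ref{der2}) and first verify it is a bounded linear operator on $W_1^{\alpha}(0,T;\R^d)$: the identity term is immediate, the drift term is a Lebesgue integral of the product $\partial_{x_k}b^i(\cdot,s,x_s)v_s^k$, and the diffusion term is a Young integral whose integrand $\partial_{x_k}\sigma^{i,j}(\cdot,s,x_s)v_s^k$ lies in $W_1^{\alpha}$ with norm controlled by $\|v\|_{\alpha,1}$, using that $W_1^{\alpha}$ is stable under multiplication by the Hölder function $s\mapsto\partial_{x_k}\sigma^{i,j}(t,s,x_s)$ (which is Hölder of order $\min(\beta,1-\alpha)>\alpha$). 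To establish differentiability I would Taylor expand to first order: for each fixed $(t,s)$,
\[
b^i(t,s,x_s+v_s)-b^i(t,s,x_s)-\sum_{k}\partial_{x_k}b^i(t,s,x_s)v_s^k
=\int_0^1\sum_{k}\big[\partial_{x_k}b^i(t,s,x_s+\theta v_s)-\partial_{x_k}b^i(t,s,x_s)\big]v_s^k\,d\theta,
\]
together with the analogous identity for $\sigma$. Since the partial derivatives of $b$ and $\sigma$ are Lipschitz in the spatial variable (they satisfy \textbf{(H2)} and \textbf{(H1)}, and $b(t,s,\cdot),\sigma(t,s,\cdot)\in C^3_b$), these remainders are pointwise $O(|v_s|^2)$.

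The main obstacle is to show that, once integrated, these quadratic remainders are $o(\|v\|_{\alpha,1})$ in the $W_1^{\alpha}$ norm, not merely pointwise small. For the drift remainder $\Phi(t)=\int_0^t R^b(t,s)\,ds$ one must bound both $\sup_t|\Phi(t)|$ and the fractional increment $\int_0^t|t-s|^{-\alpha-1}|\Phi(t)-\Phi(s)|\,ds$; the increment splits as $\int_0^s[R^b(t,r)-R^b(s,r)]\,dr+\int_s^t R^b(t,r)\,dr$, and the first piece is handled using the Hölder-in-$t$ regularity of $b$ and $\partial_x b$ from \textbf{(H2)}. The genuinely delicate term is the diffusion remainder integrated against $d(g+h)$: here I would use (\ref{eq:forpart}) together with \cite[Lemma A.2]{BR} to bound the fractional derivative $D_{0+}^{\alpha}$ of the remainder, exactly as in the proofs of Theorems \ref{th:1}--\ref{th:3}, and then verify that every resulting term carries a factor of either $\|v\|_{\infty}^2$, $\|v\|_{\infty}\|v\|_{\alpha,1}$, or a modulus of continuity of $\partial_x\sigma$, each of which is $o(\|v\|_{\alpha,1})$ as $\|v\|_{\alpha,1}\to 0$. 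The time-dependence of $\sigma$ produces the extra increments in the first argument, which are absorbed using \textbf{(H1)}(3) and \textbf{(H1)}(5); this bookkeeping is the analogue, in the present time-dependent setting, of \cite[Lemma 3 and Proposition 4]{NS}.
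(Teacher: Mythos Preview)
Your proposal is correct and follows the same overall strategy as the paper: treat the $h$-variable as affine, identify the candidate $D_2F$ via a first-order Taylor expansion in the spatial variable, check that it is a bounded linear operator, and then show the remainder is $o(\|v\|_{\alpha,1})$. The one noteworthy difference is efficiency: rather than redoing the fractional-derivative bookkeeping for the remainder ``by hand'' via (\ref{eq:forpart}) and \cite[Lemma~A.2]{BR} as you outline, the paper invokes the ready-made estimates \cite[Proposition~2.2(2) and Proposition~3.2(2)]{BR}, which directly give
\[
\Big\|\int_0^{\cdot}\big(\sigma(\cdot,s,x_s)-\sigma(\cdot,s,x_s+v_s)+\partial_x\sigma(\cdot,s,x_s)v_s\big)\,d(g_s+h_s)\Big\|_{\alpha,1}
\leq c_{\alpha,\delta,T}\,\|v\|_{\alpha,1}^2\,\|g+h\|_{1-\alpha,2},
\]
and the analogous bound for the drift term, so the remainder is in fact $O(\|v\|_{\alpha,1}^2)$. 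These propositions already encapsulate the H\"older-in-$s$ and H\"older-in-$t$ increments you plan to control via \textbf{(H1)}(3)--(5), so citing them spares you the explicit decomposition. Your extra remark that continuity of $D_1F,D_2F$ in $(h,x)$ yields \emph{joint} Fr\'echet differentiability is a point the paper leaves implicit.
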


\begin{proof}
For $(h,x)$ and $(\tilde{h}, \tilde{x})$ in $W_2^{1-\alpha}(0,T; \R^m) \times W_1^{\alpha}(0,T; \R^d)$ we have
\begin{equation*}
\begin{split}
F(h,x)_t-F(\tilde{h}, \tilde{x})_t&=x_t-\tilde{x}_t-
\int_0^t (b(t,s, x_s)-b(t,s,\tilde{x}_s)) ds \\
&-\int_0^t (\sigma(t,s,x_s)-\sigma(t,s,\tilde{x}_s)) d(g_s+h_s)
-\int_0^t \sigma(t,s,\tilde{x}_s)d(h_s-\tilde{h}_s).
\end{split}
\end{equation*}
Using  \cite[Proposition 2.2(2)]{BR}, we get that
\begin{equation*}
\begin{split}
\bigg\Vert x-\tilde{x}-
\int_0^{\cdot} (b(\cdot,s, x_s)-b(\cdot,s,\tilde{x}_s)) ds \bigg\Vert_{\alpha,1} & \leq 
c_{\alpha, T}\Vert x-\tilde{x} \Vert_{\alpha,1}.
\end{split}
\end{equation*}
From  \cite[Proposition 3.2(2)]{BR}, we obtain
\begin{equation*}
\begin{split}
&\bigg\Vert \int_0^{\cdot} (\sigma(\cdot,s,x_s)-\sigma(\cdot,s,\tilde{x}_s)) d(g_s+h_s) \bigg\Vert_{\alpha,1}\\
 & \qquad \qquad \leq 
c_{\alpha, T} \Vert x-\tilde{x} \Vert_{\alpha,1} \Vert g+h \Vert_{1-\alpha,2}(1+ \Delta(x)+\Delta(\tilde{x})),
\end{split}
\end{equation*}
where
$$
\Delta(x):=\sup_{u \in [0,T]} \int_0^u \frac{\vert x_u-x_s \vert^{\delta}}{(u-s)^{\alpha+1}} ds \leq c_{\alpha, \delta, T} \Vert x \Vert_{1-\alpha}^{\delta}
$$
and similarly $\Delta (\tilde{x})\leq c_{\alpha, \delta, T} \Vert \tilde{x} \Vert_{1-\alpha}^{\delta}$.
Finally, \cite[Proposition 3.2(1)]{BR} yields  to
\begin{equation*}
\begin{split}
\bigg\Vert \int_0^\cdot \sigma(\cdot,s,\tilde{x}_s)d(h_s-\tilde{h}_s) \bigg\Vert_{\alpha,1} & \leq 
c_{\alpha, T} (1+\Vert \tilde{x} \Vert_{\alpha,1} )\Vert h-\tilde{h} \Vert_{1-\alpha,2}.
\end{split}
\end{equation*}
Therefore, $F$ is continuous in both variables $(h,x)$. We  next show the Fr\'echet differentiability. Let $v,w\in W_1^{\alpha}(0,T; \R^d)$. By 
\cite[Proposition 2.2(2) and 3.2(2)]{BR}, we have that
$$
\Vert D_2F(h,x) (v)- D_2F(h,x) (w)\Vert_{\alpha,1} \leq c_{\alpha, T} \Vert v-w \Vert_{\alpha,1}(1+\Vert g+h \Vert_{1-\alpha,2}).
$$
Thus, $D_2F(h,x)$ is a bounded linear operator.
Moreover,
\begin{align*}
&F(h, x+v)_t-F(h,x)_t-D_2F(h,x)(v)_t \\
&\qquad =\int_0^t (b(t,s, x_s)-b(t,s,x_s+v_s)+\partial_x b(t,s,x_s) v_s) ds\\
&\qquad \qquad +\int_0^t (\sigma(t,s,x_s)-\sigma(t,s,x_s+v_s)+\partial_x \sigma(t,s,x_s)v_s) d(g_s+h_s).
\end{align*}
By the mean value theorem and \cite[Proposition 2.2(2)]{BR},
$$
\bigg\Vert\int_0^{\cdot} (b(\cdot,s, x_s)-b(\cdot,s,x_s+v_s)+\partial_x b(\cdot,s,x_s) v_s) ds \bigg\Vert_{\alpha, 1} \leq c_{\alpha, T}  \Vert v \Vert_{\alpha, 1}^2.
$$
Similarly, using \cite[Proposition 3.2(2)]{BR}, we obtain
\begin{equation*} \begin{split}
&\bigg\Vert\int_0^{\cdot} (\sigma(\cdot,s, x_s)-\sigma(\cdot,s,x_s+v_s)+\partial_x \sigma(\cdot,s,x_s) v_s) d(g_s+h_s) \bigg\Vert_{\alpha, 1} \\
&\qquad \qquad \leq c_{\alpha, \delta, T}  \Vert v \Vert_{\alpha, 1}^2 \Vert g+ h\Vert_{1-\alpha,2}.
\end{split}
\end{equation*}
This shows that $D_2F$ is the Fr\'echet derivative with respect to $x$ of $F(h,x)$. Similarly, we show that it is Fr\'echet differentiable with respect to $h$ and the derivative is given by (\ref{der1}).
\end{proof}

\begin{proposition} \label{frechet}
Assume the hypotheses of Lemma \textnormal{\ref{lema3}}.
Then the mapping $$g \in W_2^{1-\alpha}(0,T; \R^m)\rightarrow x(g) \in W_1^{\alpha}(0,T; \R^d)$$ is Fr\'echet differentiable and for any $h\in W_2^{1-\alpha}(0,T; \R^m) $ the derivative in the direction $h$ is given by
\begin{equation*}
D_h x_t^i =\sum_{j=1}^m \int_0^t \Phi_t^{ij}(s) dh_s^j,
\end{equation*}
where for $i=1,\dots,d$, $j=1,\dots,m$, $0\leq s\leq t$
\begin{equation} \label{eqn}
\begin{split}
\Phi_t^{ij}(s)=\sigma^{ij}(t,s,x_s)&+\sum_{k=1}^d \sum_{\ell=1}^m \int_s^t \partial_{x_k} \sigma^{i,\ell}(t,u, x_u) \Phi_u^{kj}(s) dg_u^{\ell} \\
& +\sum_{k=1}^d \int_s^t \partial_{x_k} b^i(t,u, x_u) \Phi_u^{kj}(s) du,
\end{split}
\end{equation}
and $\Phi_t^{ij}(s)=0$ if $s>t$.
\end{proposition}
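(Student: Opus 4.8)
The plan is to deduce the statement from Lemma \ref{lema3} by the implicit function theorem in Banach spaces. The key observation is that, with $F$ as in Lemma \ref{lema3}, the identity $F(h,x(g+h))=0$ holds for every $h$: indeed $F(h,x)=0$ says exactly that $x$ solves \eqref{eq:det} with $g$ replaced by $g+h$. Lemma \ref{lema3} shows that $F$ is Fr\'echet differentiable with partial derivatives \eqref{der1} and \eqref{der2}, and the Lipschitz-type bounds in its proof give continuity of $(h,x)\mapsto DF(h,x)$ in operator norm, so $F$ is $C^1$. Hence, once we know that $D_2F(0,x(g))$ is a bounded invertible operator on $W_1^{\alpha}(0,T;\R^d)$, the implicit function theorem yields that $h\mapsto x(g+h)$ is Fr\'echet differentiable at $0$, hence so is $g\mapsto x(g)$, and the derivative $D_hx:=\frac{d}{d\varepsilon}x(g+\varepsilon h)\big|_{\varepsilon=0}$ satisfies $D_2F(0,x(g))(D_hx)=-D_1F(0,x(g))(h)$. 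Writing this out via \eqref{der1}--\eqref{der2}, $D_hx$ is the unique $v\in W_1^\alpha(0,T;\R^d)$ solving the linear Volterra equation
\[
v_t^i-\sum_{k=1}^d\int_0^t \partial_{x_k}b^i(t,s,x_s)v_s^k\,ds-\sum_{k=1}^d\sum_{j=1}^m\int_0^t \partial_{x_k}\sigma^{i,j}(t,s,x_s)v_s^k\,dg_s^j=\sum_{j=1}^m\int_0^t \sigma^{i,j}(t,s,x_s)\,dh_s^j.
\]

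The main step is the invertibility of $D_2F(0,x(g))=\id-L$, where $L$ is the linear Volterra operator on the left-hand side above. The coefficients $\partial_x b$ and $\partial_x\sigma$ satisfy \textbf{(H2)} and \textbf{(H1)} by hypothesis and in particular fit the framework \textbf{(H3)}, so existence and uniqueness for the associated linear equation \eqref{z} is covered by the argument of Theorem \ref{th:3} together with \cite[Theorem 5.1]{N-R}. To promote this to genuine invertibility of $\id-L$ on $W_1^\alpha$, I would argue on small subintervals: on $[0,\Delta]$ the estimates of \cite[Propositions 2.2 and 3.2]{BR} bound the $W_1^\alpha$-operator norm of $L$ restricted to $[0,\Delta]$ by a constant times a positive power of $\Delta$, hence below $1$ for $\Delta$ small; a Neumann series inverts $\id-L$ there, and one patches the finitely many subintervals $[i\Delta,(i+1)\Delta]$ exactly as in the proof of Theorem \ref{th:1}. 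This produces a bounded two-sided inverse on all of $[0,T]$, with the required continuity in $g$ inherited from the continuity of $DF$ established in Lemma \ref{lema3}.

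To identify $D_hx$ with the kernel representation, I would first solve \eqref{eqn} for $\Phi_t^{ij}(s)$: for fixed $s$ this is, as a function of $t\in[s,T]$, a linear equation of the same type as \eqref{z}, so it has a unique solution of the required regularity by the linear theory just invoked. Setting $v_t^i:=\sum_{j=1}^m\int_0^t \Phi_t^{ij}(s)\,dh_s^j$ and substituting the defining equation \eqref{eqn} for $\Phi$, an application of the Fubini theorem for Young--Stieltjes integrals over the triangle $\{0\le s\le u\le t\}$ interchanges the outer $dh_s$ integral with the inner $dg_u$ and $du$ integrals and collapses the double integrals into $\sum_{k,\ell}\int_0^t \partial_{x_k}\sigma^{i,\ell}(t,u,x_u)v_u^k\,dg_u^\ell+\sum_k\int_0^t \partial_{x_k}b^i(t,u,x_u)v_u^k\,du$, using $\sum_j\int_0^u \Phi_u^{kj}(s)\,dh_s^j=v_u^k$. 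Thus $v$ solves precisely the linear equation characterizing $D_hx$, and by the uniqueness coming from the invertibility of $\id-L$ we conclude $D_hx=v$, which is the claimed formula.

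The principal obstacle is the invertibility step: one must control $L$ in the $W_1^\alpha$ norm rather than in the more convenient $C^{1-\alpha}$ norm in which the a priori estimates of Theorems \ref{th:1}--\ref{th:3} are phrased, and check that the small-$\Delta$ contraction argument genuinely patches into a global bounded inverse with $g$-continuous operator norm. A secondary technical point is the rigorous justification of the Fubini interchange for the pathwise integrals against $dg$ and $dh$, which should be carried out through the fractional-calculus representation \eqref{eq:forpart} rather than formally.
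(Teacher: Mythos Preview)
Your proposal is correct and follows essentially the same route as the paper, which simply defers to the argument of \cite[Proposition~4]{NS}: apply the implicit function theorem to the map $F$ of Lemma~\ref{lema3}, use the linear theory to invert $D_2F(0,x(g))$, and then identify the directional derivative with the kernel $\Phi$ via a Fubini interchange. The two technical obstacles you single out are exactly what the paper isolates as Propositions~\ref{prop2} and~\ref{prop9}: the $\|\cdot\|_{\alpha,1}$-estimate in Proposition~\ref{prop2} already delivers the bounded inverse on $W_1^{\alpha}$ (so your Neumann--patching argument, while valid, is not needed separately), and the H\"older regularity of $\Phi$ in both variables established in Proposition~\ref{prop9} is precisely what justifies the Young--Stieltjes Fubini step.
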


\begin{proof}
The proof follows similarly as the proof of \cite[Proposition 4]{NS} once we have extended \cite[Proposition 2 and 9]{NS}. We proceed with both  extensions below.
\end{proof}

The next propositions are the extensions of \cite[Proposition 2 and 9]{NS}, respectively.
\begin{proposition} \label{prop2}
Assume the hypotheses of Lemma \textnormal{\ref{lema3}}.
Fix $g \in W^{1-\alpha}_2(0,T; \R^m)$ and consider the linear equation
$$
v_t=w_t+\int_0^t\partial_{x} b(t,s,x_s)v_s ds+\int_0^t \partial_{x}\sigma(t,s,x_s) 
v_s dg_s.
$$
where $w \in C^{1-\alpha}(0,T;\R^d)$. Then there exists a unique solution $v \in  C^{1-\alpha}(0,T;\R^{d })$ such that
\begin{equation} \label{estimate}
\Vert v \Vert_{\alpha,1} \leq c^{(1)}_{\alpha,T}\Vert w \Vert_{\alpha,1} \exp \left(c^{(2)}_{\alpha,T} \Vert g \Vert_{1-\alpha, 2}^{1/(1-2\alpha)}\right),
\end{equation}
for some positive constants $c^{(1)}_{\alpha,T}$ and $c^{(2)}_{\alpha,T}$.
\end{proposition}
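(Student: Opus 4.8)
The plan is to regard this as a linear Volterra equation in $v$ with inhomogeneity $w$ and to prove (\ref{estimate}) by the interval-splitting scheme already used for Theorems \ref{th:1}--\ref{th:3}, keeping the dependence on $w$ strictly linear. Existence and uniqueness of $v\in C^{1-\alpha}(0,T;\R^d)$ follow from a fixed-point argument on small subintervals exactly as in \cite[Theorem 5.1]{N-R} and the proof of Theorem \ref{th:3}: since $b(t,s,\cdot),\sigma(t,s,\cdot)\in C^3_b$ and $x\in C^{1-\alpha}(0,T;\R^d)$ solves (\ref{eq:det}), the coefficients $\partial_x b(\cdot,r,x_r)$ and $\partial_x\sigma(\cdot,r,x_r)$ are bounded and, by {\bf (H1)}(3) and {\bf (H2)}(2), H\"older continuous in the outer time variable; linearity together with the a priori bound below makes the local solution global.

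For the estimate I would fix a subinterval $[a,b]$ of length $\Delta\le 1$ and bound $v_t-v_s$ for $s,t\in[a,b]$ by decomposing, as in (\ref{eq:dec2}),
\begin{equation*}
v_t-v_s=(w_t-w_s)+\Big(\int_0^t-\int_0^s\Big)\partial_x b(\cdot,r,x_r)v_r\,dr+\Big(\int_0^t-\int_0^s\Big)\partial_x\sigma(\cdot,r,x_r)v_r\,dg_r.
\end{equation*}
Each of the last two groups splits into a \emph{Volterra piece} --- the difference $\partial_x b(t,\cdot)-\partial_x b(s,\cdot)$, resp.\ $\partial_x\sigma(t,\cdot)-\partial_x\sigma(s,\cdot)$, integrated over $[0,s]$ and controlled by the $|t-s|^{\mu}$-regularity in {\bf (H1)}(3)/{\bf (H2)}(2) --- and a \emph{diagonal piece} over $[s,t]$. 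The Young integrals are handled through the fractional integration-by-parts formula (\ref{eq:forpart}), the bound \cite[(3.5)]{HN}, and the pointwise control of $D^\alpha_{a+}$ from \cite[Lemma A.2]{BR} and \cite[Proposition 3.2]{BR}, just as in (\ref{eq:cotaC1})--(\ref{eq:cotaD}). The decisive scaling is that, on an interval of length $\Delta$, converting the $(1-\alpha)$-H\"older seminorm of $v$ into the $W_1^\alpha$ seminorm $\int_a^t(t-s)^{-\alpha-1}|v_t-v_s|\,ds$ costs a factor $\int_a^t(t-s)^{-2\alpha}\,ds\lesssim\Delta^{1-2\alpha}$; this is exactly the source of the exponent $1/(1-2\alpha)$.

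Collecting these bounds yields on $[i\Delta,(i+1)\Delta]$ a recursion, for the localized $W_1^\alpha$ norm, of the schematic form
\begin{equation*}
\Vert v\Vert_{i\Delta,(i+1)\Delta,\alpha,1}\le c_{\alpha,T}\Vert w\Vert_{\alpha,1}+c_{\alpha,T}\Vert g\Vert_{1-\alpha,2}\,\Delta^{1-2\alpha}\Big(\Vert v\Vert_{i\Delta,(i+1)\Delta,\alpha,1}+\sum_{\ell=1}^{i}\Vert v\Vert_{(\ell-1)\Delta,\ell\Delta,\alpha,1}\Big),
\end{equation*}
where the sum gathers the contributions of the Volterra pieces coming from all earlier subintervals (the analogue of the $C_1^i$ terms in (\ref{eq:cotaC1})). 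Choosing $\Delta$ with $c_{\alpha,T}\Vert g\Vert_{1-\alpha,2}\Delta^{1-2\alpha}\le\frac12$ absorbs the diagonal term on the left, which forces $\Delta\sim\Vert g\Vert_{1-\alpha,2}^{-1/(1-2\alpha)}$ and hence $N=[T/\Delta]+1\sim\Vert g\Vert_{1-\alpha,2}^{1/(1-2\alpha)}$ subintervals. Iterating the resulting recursion by a discrete Gronwall argument, as in Step 3 of Theorems \ref{th:1} and \ref{th:3}, produces geometric growth $\sim C^{N}$ with a prefactor linear in $\Vert w\Vert_{\alpha,1}$, that is $c^{(1)}_{\alpha,T}\Vert w\Vert_{\alpha,1}\exp(c^{(2)}_{\alpha,T}\Vert g\Vert_{1-\alpha,2}^{1/(1-2\alpha)})$, which is (\ref{estimate}).

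The \textbf{main obstacle} I anticipate is the bookkeeping forced by the Volterra structure: unlike in \cite{NS}, the outer-time differences $\partial_x\sigma(t,\cdot)-\partial_x\sigma(s,\cdot)$ generate on each subinterval a full sum over \emph{all} previous subintervals, so the recursion is not the simple two-term one of the non-Volterra case, and one must check that accumulating these cross terms does not inflate the exponent beyond $1/(1-2\alpha)$. A secondary difficulty is to keep the dependence on $w$ exactly linear throughout (so that it factors out of the exponential) and to ensure that the constant entering the contraction condition on $\Delta$ is the same one driving the geometric iteration, so that a single choice of $\Delta$ simultaneously controls both.
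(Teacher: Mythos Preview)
Your sketch is sound and would work, but the paper does not prove this proposition from scratch: it simply invokes \cite[Proposition~4.2]{BR} (with $\gamma=1$) for the estimate and \cite{BR} for existence and uniqueness. What you are outlining is essentially a reconstruction of that result in \cite{BR}, transplanted into the interval-splitting language of Theorems~\ref{th:1}--\ref{th:3}. Your identification of the mechanism behind the exponent $1/(1-2\alpha)$ --- that the $W_1^\alpha$ seminorm picks up a factor $\Delta^{1-2\alpha}$ from integrating $(t-s)^{-2\alpha}$ over a length-$\Delta$ interval, in contrast with the $\Delta^{1-\alpha}$ appearing in the H\"older-norm arguments of the earlier theorems --- is exactly right, and is the reason this proposition carries a different exponent from Theorems~\ref{th:1}--\ref{th:3}.

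One point to watch if you carry out your plan: the $\Vert\cdot\Vert_{\alpha,1}$ norm is not local in the same way as the H\"older seminorm --- the integral $\int_0^t |v_t-v_s|(t-s)^{-\alpha-1}\,ds$ runs from $0$, not from the left endpoint of the current subinterval --- so when gluing the subinterval estimates you must account for cross-interval contributions to this integral, not only to the Volterra kernel differences. This is handled in \cite{BR} and is routine, but it is an extra bookkeeping step beyond what you describe. Given that the paper treats the proposition as a direct citation, your route is more laborious than necessary here, though it has the merit of being self-contained.
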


\begin{proof}
Existence and uniqueness follows from  \cite{BR} and the estimate (\ref{estimate}) follows from  \cite[Proposition 4.2]{BR} with $\gamma=1$.
\end{proof}

\begin{proposition} \label{prop9}
Assume the hypotheses of Lemma \textnormal{\ref{lema3}}.
Then the solution to the linear equation \textnormal{(\ref{eqn})} is H\"older continuous of order $1-\alpha$ in $t$, uniformly in $s$ and H\"older continuous of order $\beta \wedge (1-\alpha)$ in $s$, uniformly in $t$.
\end{proposition}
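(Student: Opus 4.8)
The plan is to prove both regularity statements by reducing the relevant increment of $\Phi$ to the linear equation treated in Proposition \ref{prop2} and invoking its estimate \eqref{estimate}. I first record a uniform bound. For fixed $s$, the $\R^d$-valued map $u\mapsto\Phi^{\cdot j}_u(s)$ solves, on $[s,T]$, the linear equation of Proposition \ref{prop2} with source $w_u=\sigma^{\cdot j}(u,s,x_s)$. Since $\sigma\in C^3_b$ and, by Theorem \ref{t:exist}, $\mu>1-\alpha>\alpha$ together with $x\in C^{1-\alpha}(0,T;\R^d)$, the function $u\mapsto\sigma^{\cdot j}(u,s,x_s)$ has a $W_1^{\alpha}$-norm bounded uniformly in $s$ (the singular kernel $|u-u''|^{-\alpha-1}$ being integrable against the $\mu$-H\"older increments supplied by {\bf (H1)}(3)). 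Hence \eqref{estimate} yields $\sup_{s}\Vert\Phi^{\cdot j}(s)\Vert_{\alpha,1}\le C$, and in particular $\sup_s\Vert\Phi^{\cdot j}(s)\Vert_{\infty}\le C$.

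Next I would prove H\"older continuity of order $1-\alpha$ in $t$, uniformly in $s$. Fix $s$ and estimate $\Phi^{ij}_{t'}(s)-\Phi^{ij}_t(s)$ from \eqref{eqn} on a subinterval $[t,t']$ of length at most $\Delta$, where $\Delta\le1$ is chosen small depending on $\Vert g\Vert_{1-\alpha}$ and $T$ but \emph{not} on $s$, exactly as in the proofs of Theorems \ref{th:1}--\ref{th:3}. The source difference $\sigma^{ij}(t',s,x_s)-\sigma^{ij}(t,s,x_s)$ is $O(|t'-t|^{\mu})=O(|t'-t|^{1-\alpha})$ by {\bf (H1)}(3) since $\mu>1-\alpha$; the slice integrals over $[t,t']$ are controlled through \eqref{eq:forpart}, \cite[Lemma A.2]{BR} and \cite[(3.5)]{HN}, producing a factor $|t'-t|^{1-\alpha}$ times $\Vert g\Vert_{1-\alpha}$ and the (uniform) norms of $x$ and $\Phi(s)$; the coefficient-difference integrals over $[s,t]$ use the $t$-regularity of $\partial_{x_k}\sigma$ from {\bf (H1)}(3) and of $\partial_{x_k}b$ from {\bf (H2)}(4). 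The self-referential term of the form $C\Vert g\Vert_{1-\alpha}\Delta^{1-\alpha}\Vert\Phi(s)\Vert_{t,t',1-\alpha}$ is absorbed by the choice of $\Delta$, and patching the resulting subinterval H\"older bounds across the $n=[T/\Delta]+1$ intervals gives $\sup_s\Vert\Phi^{\cdot j}(s)\Vert_{1-\alpha}\le C$.

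The substantive part is H\"older continuity in $s$, uniformly in $t$. Fix $t$ and take $s<s'\le t$. The key structural observation is that $\psi_u:=\Phi_u(s')-\Phi_u(s)$, for $u\in[s',T]$, again solves the linear equation of Proposition \ref{prop2} on $[s',T]$, now with source
\[
\begin{aligned}
w^i_u={}&\sigma^{ij}(u,s',x_{s'})-\sigma^{ij}(u,s,x_s)\\
&-\sum_{k,\ell}\int_s^{s'}\partial_{x_k}\sigma^{i\ell}(u,r,x_r)\Phi^{kj}_r(s)\,dg^\ell_r-\sum_{k}\int_s^{s'}\partial_{x_k}b^{i}(u,r,x_r)\Phi^{kj}_r(s)\,dr.
\end{aligned}
\]
I would then establish $\Vert w\Vert_{\alpha,1}\le C|s'-s|^{\beta\wedge(1-\alpha)}$. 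For the supremum part, {\bf (H1)}(4) gives $|\sigma(u,s',x_{s'})-\sigma(u,s,x_{s'})|\le K|s'-s|^{\beta}$, while {\bf (H1)}(1) and $x\in C^{1-\alpha}$ give $|\sigma(u,s,x_{s'})-\sigma(u,s,x_s)|\le K\Vert x\Vert_{1-\alpha}|s'-s|^{1-\alpha}$; the two short-interval integrals are $O(|s'-s|^{1-\alpha})$ and $O(|s'-s|)$, the stochastic one being handled as the terms $C,D$ in Theorem \ref{th:1} via \eqref{eq:forpart}, \cite[Lemma A.2]{BR} and \cite[(3.5)]{HN}, using the uniform bounds from the first two paragraphs. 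For the $W_1^{\alpha}$-seminorm part I would write $\sigma(u,s',x_{s'})-\sigma(u,s,x_s)=\int_{u''}^{u}\bigl(\partial_t\sigma(\tau,s',x_{s'})-\partial_t\sigma(\tau,s,x_s)\bigr)\,d\tau$ and bound the integrand by $C|s'-s|^{\beta\wedge(1-\alpha)}$ via {\bf (H1)}(1) and {\bf (H1)}(4), while the $u$-increments of the short-interval integrals carry a factor $|u-u''|^{\mu}$ from the $t$-regularity in {\bf (H1)}(3) (with the $r$-regularity from {\bf (H1)}(5)); since $\mu>\alpha$ the kernel $|u-u''|^{-\alpha-1}$ is integrable against all these increments. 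Applying \eqref{estimate} to $\psi$ on $[s',T]$ (the constants being uniform under translation of the base point) gives $\Vert\psi\Vert_{\alpha,1}\le C|s'-s|^{\beta\wedge(1-\alpha)}$, and evaluating at $u=t$ yields the claim.

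The main obstacle is the $s$-regularity: first identifying $\psi$ as the solution of a linear equation with the explicit and \emph{small} source $w$, and above all estimating the fractional-Sobolev seminorm of $w$, where one must simultaneously exploit the joint $(t,s,x)$-regularity encoded in {\bf (H1)}(1),(3),(4),(5) and {\bf (H2)}(4) and treat the stochastic integrals over the short interval $[s,s']$ through \eqref{eq:forpart}. The delicate bookkeeping is to verify that the several contributions, carrying exponents $\beta$, $1-\alpha$, $\mu$ and $1$, all collapse to the single H\"older exponent $\beta\wedge(1-\alpha)$.
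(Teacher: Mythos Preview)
Your proposal is correct and follows essentially the paper's approach: for the $s$-regularity you identify $\Phi_\cdot(s')-\Phi_\cdot(s)$ as the solution of the linear equation in Proposition~\ref{prop2} with precisely the source $w$ the paper writes down, bound $\Vert w\Vert_{\alpha,1}\le C|s'-s|^{\beta\wedge(1-\alpha)}$, and invoke \eqref{estimate}. Your treatment of the $W_1^\alpha$-seminorm of $w$ via the mean-value formula in the first variable is more explicit than the paper's, which instead passes through the inclusion $C^{1-\alpha}\subset W_1^\alpha$ and \cite[Lemma~A.2]{BR} for the $\sigma$-term and \cite[Proposition~4.1]{N-R} for the $dg$-integral; both routes give the same exponent.

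The only substantive difference is the $t$-regularity. The paper obtains it in one line from a \cite{BR} estimate of the form
\[
\sup_{s}\Vert\Phi_\cdot(s)\Vert_{1-\alpha}\le c_{\alpha,T}\bigl(1+(1+\Vert g\Vert_{1-\alpha,2})\sup_{s}\Vert\Phi_\cdot(s)\Vert_{\alpha,1}\bigr),
\]
the right-hand side being finite by Proposition~\ref{prop2}. You instead propose to redo a subdivision argument \`a la Theorems~\ref{th:1}--\ref{th:3}; this works, but is heavier than necessary given that the $\Vert\cdot\Vert_{\alpha,1}$-bound you established in your first paragraph already feeds directly into the cited \cite{BR} inequality.
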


\begin{proof}
By the estimates in \cite{BR}, we get
$$
\sup_{s \in [0,T]}\Vert \Phi_{\cdot}(s) \Vert_{1-\alpha}
\leq c_{\alpha, T} (1+ (1+\Vert g \Vert_{1-\alpha, 2}) \sup_{s \in [0,T]}\Vert \Phi_{\cdot}(s) \Vert_{\alpha,1}).
$$
which is bounded by Proposition \ref{prop2}. Therefore, $\Phi_t(s)$ is H\"older continuous of order $1-\alpha$ in $t$, uniformly in $s$. On the other hand, appealing again to Proposition \ref{prop2}, for $s'\leq s \leq t$, we have 
$$
\Vert \Phi_{\cdot}(s)- \Phi_{\cdot}(s')\Vert_{\alpha,1} \leq c^{(1)}_{\alpha,T}\Vert w_{\cdot}(s,s') \Vert_{\alpha,1} \exp \left(c^{(2)}_{\alpha,T} \Vert g \Vert_{1-\alpha, 2}^{1/(1-2\alpha)}\right),
$$
where
\begin{equation*} 
\begin{split}
w_t(s,s')=\sigma(t,s,x_s)-\sigma(t,s',x_{s'})&+\int_{s'}^s \partial_{x} \sigma(t,u, x_u) \Phi_u(s') dg_u\\
& + \int_{s'}^s \partial_{x} b(t,u, x_u) \Phi_u(s') du.
\end{split}
\end{equation*}
We next bound the $\Vert \cdot \Vert_{\alpha,1}$-norm of $w_{\cdot}(s,s')$. For the first term, by the definition of the $\Vert \cdot \Vert_{\alpha,1}$-norm, we have
\begin{equation*} \begin{split}
\Vert \sigma(\cdot,s,x_s)-\sigma(\cdot,s',x_{s'})\Vert_{\alpha,1} &\leq  c_{\alpha, T} \Vert \sigma(\cdot,s,x_s)-\sigma(\cdot,s',x_{s'})\Vert_{1-\alpha}\\
 &\leq  c_{\alpha, \beta,T} (s-s')^{\beta \wedge (1-\alpha)},
\end{split}
\end{equation*}
where we have used \cite[Lemma A.2]{BR} in the last inequality.

For the second term, as $\partial_x \sigma$ is bounded, we obtain
\begin{equation*} \begin{split}
\bigg\Vert \int_{s'}^s \partial_{x} \sigma(\cdot,u, x_u) \Phi_u(s') dg_u\bigg\Vert_{\alpha, 1}&\leq c_{\alpha, T}
\bigg\vert \int_{s'}^s \Phi_u(s') dg_u\bigg  \vert\\
&\leq 
 c_{\alpha,T} (s-s')^{1-\alpha} \Vert g \Vert_{1-\alpha,2} \sup_{s \in [0,T]} \Vert \Phi_{\cdot}(s) \Vert_{\alpha,1},
\end{split}
\end{equation*}
where the last inequality follows from \cite[Proposition 4.1]{N-R}.

Finally, for the last term, as $\partial_xb$ is bounded, we get 
\begin{equation*} \begin{split}
\bigg\Vert \int_{s'}^s\partial_{x} b(t,u, x_u) \Phi_u(s') du\bigg\Vert_{\alpha, 1}&\leq c_{\alpha, T}
\bigg\vert \int_{s'}^s \Phi_u(s') du\bigg  \vert\\
&\leq   c_{\alpha,T} (s-s') \sup_{s \in [0,T]} \Vert \Phi_{\cdot}(s) \Vert_{\alpha,1}.
\end{split}
\end{equation*}

Therefore, we conclude that
$$
\Vert \Phi_{\cdot}(s)- \Phi_{\cdot}(s')\Vert_{\alpha,1} \leq c_{\alpha, \beta,T} (s-s')^{\beta \wedge (1-\alpha)}\exp \left(c^{(2)}_{\alpha,T} \Vert g \Vert_{1-\alpha, 2}^{1/(1-2\alpha)}\right),
$$
which implies that $\Phi_t(s)$ is H\"older continuous of order $\beta \wedge (1-\alpha)$ in $s$ uniformly in $t$.
\end{proof}

\renewcommand{\theequation}{4.\arabic{equation}}
\setcounter{equation}{0}
\section{Stochastic Volterra equations driven by fBm}

In this section we apply the results obtained in Section 3 to the Volterra equation \eqref{eq:Volterra}.
Recall that $W^H=\{ W_t^H,\, t\in [0,T]\}$ is an $m$-dimensional fractional Brownian motion with Hurst parameter $H>\frac12$. That is, a centered Gaussian  process with covariance function
\[\be(W_t^{H,i} W_t^{H,j})=R_H(t,s)=\frac12\left( t^{2H}+s^{2H}-|t-s|^{2H}\right)\delta_{ij}.\]
Fix $\alpha \in (1-H, \frac12)$.
As the trajectories of $W^H$ are $(1-\alpha+\epsilon)$-H\"older continuous for all $\epsilon<H+\alpha-1$,
by the first inclusion in (\ref{inclusion}), we can apply the framework of Section 3. In particular, under the assumptions of Theorem \ref{th:2}, there exists a unique solution to equation \eqref{eq:Volterra} satisfying
\begin{equation*} 
\sup_{0\leq t\leq T} |X_t|\leq (|X_0|+1)\exp\left(2T\left(\left(K_{T,\alpha}^{(3)}+K_{T,\alpha,\beta}^{(4)}\|W^H\|_{1-\alpha}\right)^{1/(1-\alpha)}\vee 1 \vee T\right) \right).
\end{equation*}
Moreover, under the further assumptions of Theorem \ref{th:1}, we have the estimate
\begin{equation*}
\sup_{0\leq t\leq T} |X_t|\leq |X_0|+1+ T\left(\left(K_{T,\alpha}^{(1)}+ K_{T,\alpha,\beta}^{(2)}\|W^H\|_{1-\alpha}\right)^{1/(1-\alpha)}\vee 1 \vee T\right).
\end{equation*}

As a consequence of these estimates we can establish the following integrability properties of the solution to \eqref{eq:Volterra}.
\begin{theorem}
Assume that $\be(|X_0|^p)<\infty$ for all $p\geq 2$ and that $\sigma$ and $b$ satisfy the hypotheses of Theorem \ref{th:2}. 
Then for all $p\geq 2$
\[\be\left(\sup_{0\leq t\leq T} |X_t|^p\right)<\infty.\]
Moreover, if for any $\lambda>0$ and $\gamma<2H$, $\be\left(\exp(\lambda|X_0|^\gamma)\right)<\infty$,  then under the assumptions of Theorem \ref{th:1}, we have
\[\be\left(\exp\left(\lambda\left(\sup_{0\leq t\leq T}|X_t|^\gamma\right)\right)\right)<\infty,\]
for any $\lambda>0$ and $\gamma<2H$.
\end{theorem}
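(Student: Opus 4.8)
The plan is to combine the two a priori supremum-norm estimates recorded just above (coming from Theorems \ref{th:1} and \ref{th:2}) with the exponential integrability of the H\"older norm of the driving fractional Brownian motion. Write $M:=\sup_{0\le t\le T}|X_t|$ and $\Xi:=\|W^H\|_{1-\alpha}$. The crucial probabilistic input is that, since $W^H$ is a centered Gaussian process whose trajectories lie in $C^{1-\alpha}(0,T;\R^m)$ and $\Xi$ is a measurable seminorm of this Gaussian vector, a Fernique-type argument (for instance via the Garsia--Rodemich--Rumsey inequality, as in \cite{NS}) yields a constant $\lambda_0>0$ with $\be\big(\exp(\lambda_0\,\Xi^2)\big)<\infty$. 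Consequently $\Xi$ has finite moments of all orders and, for every $q<2$ and every $\lambda>0$, $\be\big(\exp(\lambda\,\Xi^{q})\big)<\infty$, because the inequality $\lambda x^{q}\le\lambda_0 x^{2}+c(\lambda,q)$ holds for all $x\ge0$ when $q<2$. Everything then reduces to matching the exponents appearing in the two supremum-norm estimates against this threshold $q<2$.

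For the polynomial moments I would use the estimate from Theorem \ref{th:2}, which gives $M\le(|X_0|+1)\exp\!\big(2T((K^{(3)}_{T,\alpha}+K^{(4)}_{T,\alpha,\beta}\Xi)^{1/(1-\alpha)}\vee1\vee T)\big)$. Raising to the $p$-th power and using $(a+b\Xi)^{1/(1-\alpha)}\le C(1+\Xi^{1/(1-\alpha)})$, one bounds $M^p$ by $(|X_0|+1)^p$ times $\exp\!\big(c_p+c_p\Xi^{1/(1-\alpha)}\big)$. Applying the Cauchy--Schwarz inequality splits the expectation into $\be\big((|X_0|+1)^{2p}\big)^{1/2}$, finite by the moment hypothesis on $X_0$, and $\be\big(\exp(2c_p\Xi^{1/(1-\alpha)})\big)^{1/2}$. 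Since $\alpha<\tfrac12$ forces $1/(1-\alpha)<2$, the latter factor is finite for every $p$ by the Fernique input, and $\be(M^p)<\infty$ follows.

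For the exponential integrability I would instead use the linear estimate from Theorem \ref{th:1}, namely $M\le|X_0|+1+T((K^{(1)}_{T,\alpha}+K^{(2)}_{T,\alpha,\beta}\Xi)^{1/(1-\alpha)}\vee1\vee T)$, so that $M\le|X_0|+C_1+C_2\Xi^{1/(1-\alpha)}$. Fix $\gamma<2H$ and $\lambda>0$. The point is that $\alpha$ is a free parameter and may be taken arbitrarily close to the lower endpoint $1-H$ of its admissible range $(1-H,\alpha_0)$: since $\gamma<2H$ gives $1-\gamma/2>1-H$, I can pick $\alpha\in(1-H,\min\{\alpha_0,1-\gamma/2\})$, a nonempty interval, for which $\gamma/(1-\alpha)<2$. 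With this choice, $(a+b+c)^\gamma\le C_\gamma(a^\gamma+b^\gamma+c^\gamma)$ gives $M^\gamma\le C_\gamma(|X_0|^\gamma+C_1^\gamma+C_2^\gamma\Xi^{\gamma/(1-\alpha)})$, whence $\exp(\lambda M^\gamma)\le e^{\lambda C_\gamma C_1^\gamma}\exp(\lambda C_\gamma|X_0|^\gamma)\exp(\lambda C_\gamma C_2^\gamma\Xi^{\gamma/(1-\alpha)})$. A final Cauchy--Schwarz separates the two genuinely random factors: $\be\big(\exp(2\lambda C_\gamma|X_0|^\gamma)\big)$ is finite by the hypothesis on $X_0$ (valid for every exponent below $2H$), and $\be\big(\exp(2\lambda C_\gamma C_2^\gamma\Xi^{\gamma/(1-\alpha)})\big)$ is finite by the Fernique input since $\gamma/(1-\alpha)<2$.

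The routine parts are the two Cauchy--Schwarz splittings and the convexity inequalities used to linearize the exponents. The genuine content, and the step I expect to be the main obstacle, is twofold: isolating the correct Gaussian tail, that is establishing $\be(\exp(\lambda_0\Xi^2))<\infty$ and hence $\be(\exp(\lambda\Xi^q))<\infty$ for every $q<2$; and, for the exponential statement, the bookkeeping that selects $\alpha$ close enough to $1-H$ so that $\gamma/(1-\alpha)<2$. This is precisely where the sharp threshold $\gamma<2H$ enters: the supremum-norm estimate contributes the exponent $1/(1-\alpha)$, while the Gaussian tail of $\Xi$ caps the admissible growth at the power $2$, giving $\gamma<2(1-\alpha)<2H$.
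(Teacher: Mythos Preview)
Your proposal is correct and is precisely the approach the paper has in mind: the paper does not spell out a proof but simply presents the theorem as a direct consequence of the two supremum-norm bounds from Theorems \ref{th:1} and \ref{th:2}, combined with the Fernique-type exponential integrability of $\|W^H\|_{1-\alpha}$. Your Cauchy--Schwarz splitting, the observation that $1/(1-\alpha)<2$, and the choice of $\alpha$ close to $1-H$ so that $\gamma/(1-\alpha)<2$ are exactly the standard steps (as in \cite{HN,NS}) that the paper leaves implicit.
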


We next proceed with the study of the existence and smoothness of the density of the solution to (\ref{eq:Volterra}). From now on we assume that the initial condition is constant, that is,  $X_0=x_0 \in \R^d$.
We start by extending the results in \cite{NS} in order to show the existence of the density of the solution to the Volterra equation (\ref{eq:Volterra}) when $\sigma$ is not necessarily bounded. 
We first derive the (local) Malliavin differentiability of the solution.
\begin{theorem} \label{d12loc}
Assume the hypotheses of Lemma \ref{lema3}. Then the solution to \eqref{eq:Volterra}
is almost surely differentiable in the directions of the Cameron-Martin space. Moreover, for any $t>0$, $X_t^i$ belongs to the space $\mathbb{D}^{1,2}_{\textnormal{loc}}$ and the derivative satisfies for $i=1,\dots,d$, $j=1,\dots,m$,
\begin{equation} \label{eq:der}\begin{split}
D_s^jX_t^i=\sigma^{ij}(t,s,X_s)&+\sum_{k=1}^d\sum_{\ell=1}^m
\int_s^t \partial_{x_k} \sigma^{i \ell}(t,r,X_r) D_s^jX_r^k dW_r^{H,\ell}\\
&+\sum_{k=1}^d \int_s^t \partial_{x_k} b^i(t,r,X_r)D_s^j X_r^k dr,
\end{split}
\end{equation}
if $s\leq t$ and 0 if $s>t$.
\end{theorem}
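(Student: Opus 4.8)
The plan is to follow the approach of \cite{NS}, turning the deterministic Fr\'echet differentiability of Proposition \ref{frechet} into Malliavin differentiability by means of a localization argument. I begin with the pathwise representation $X_t(\omega)=x_t(W^H(\omega))$, where $x(\cdot)$ is the solution map of the deterministic equation \eqref{eq:det}. This is legitimate because the trajectories of $W^H$ are $(1-\alpha+\epsilon)$-H\"older continuous and hence lie in $W_2^{1-\alpha}(0,T;\R^m)$ almost surely by the first inclusion in \eqref{inclusion}, so Theorem \ref{t:exist} applies pathwise.

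Next I address differentiability along Cameron--Martin directions. Fix $h\in\mathcal{H}_H$; since $\mathcal{H}_H\subset C^{H}(0,T;\R^m)\subset W_2^{1-\alpha}(0,T;\R^m)$, Proposition \ref{frechet} yields that $\varepsilon\mapsto x_t(W^H+\varepsilon h)$ is differentiable at $\varepsilon=0$, with
\[
D_hX_t^i=\sum_{j=1}^m\int_0^t\Phi_t^{ij}(s)\,dh_s^j,
\]
where $\Phi$ solves the linear equation \eqref{eqn} evaluated at $g=W^H$. This establishes the almost sure differentiability in the directions of the Cameron--Martin space and simultaneously identifies the candidate Malliavin derivative as $\Phi_t^{ij}(s)$, which is precisely the solution of \eqref{eq:der}; its existence and uniqueness are guaranteed by Proposition \ref{prop2}.

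To upgrade this to membership in $\mathbb{D}^{1,2}_{\textnormal{loc}}$, I would introduce the localizing sets $\Omega_N=\{\|W^H\|_{1-\alpha}\le N\}$, which increase to $\Omega$ almost surely. On $\Omega_N$ the estimate of Theorem \ref{th:2} bounds $\sup_{t}|X_t|$ by a constant $C_N$, so replacing $\sigma$ and $b$ by coefficients that coincide with them on $\{|x|\le C_N\}$ and have $\sigma$ bounded produces a modified solution $X^{(N)}$ with $X^{(N)}=X$ on $\Omega_N$. For these bounded coefficients, Theorem \ref{th:1} gives the moment bounds on $X^{(N)}$, while the linear system \eqref{z}---which for the choice $w=\sigma(\cdot,\cdot,X_\cdot)$, $f=\partial_x\sigma$, $h=\partial_x b$ is exactly \eqref{eq:der}---together with Theorem \ref{th:3} bounds its derivative. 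Since the exponents appearing are of the form $\|W^H\|_{1-\alpha}^{1/(1-\alpha)}$ with $1/(1-\alpha)<2$, Fernique's theorem guarantees the requisite $L^2$ integrability. The standard criterion identifying $\mathcal{H}$-differentiable functionals with $\mathbb{D}^{1,2}$ functionals then gives $X_t^{(N),i}\in\mathbb{D}^{1,2}$, and the localization lemma for $\mathbb{D}^{1,2}_{\textnormal{loc}}$ yields $X_t^i\in\mathbb{D}^{1,2}_{\textnormal{loc}}$ with $D_s^jX_t^i=\Phi_t^{ij}(s)$, that is, equation \eqref{eq:der}.

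The main obstacle will be the transfer step: making rigorous the identification of the pathwise Fr\'echet derivative with the Malliavin derivative under the isometry $K^{\ast}_H$ between $\mathcal{H}$ and its image in $L^2(0,T;\R^m)$, and verifying that the localized derivatives $DX^{(N)}$ are consistent across the sets $\Omega_N$ so that they patch into a well-defined derivative on all of $\Omega$. Controlling the moments of the derivative process $\Phi$ through the exponential estimate of Theorem \ref{th:3} combined with Fernique's theorem, and checking that the cutoff coefficients still satisfy hypotheses \textbf{(H1)}--\textbf{(H3)}, are the technical points that require the most care.
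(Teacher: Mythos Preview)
Your proposal is workable but takes a longer route than the paper. Both you and the paper start identically: invoke Proposition \ref{frechet} to obtain Fr\'echet differentiability of the solution map $g\mapsto x(g)$ on $W_2^{1-\alpha}$, then observe that since $\mathcal{R}_H\varphi\in W_2^{1-\alpha}$ for every $\varphi\in\mathcal{H}$, the directional derivatives $D_{\mathcal{R}_H\varphi}X_t^i$ exist almost surely and are given by the kernel $\Phi$ from \eqref{eqn}. The divergence is in the passage to $\mathbb{D}^{1,2}_{\textnormal{loc}}$.

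The paper does not localize coefficients at all. Instead it appeals directly to \cite[Proposition 4.1.3]{Nua}, which says that on an abstract Wiener space, any random variable that is almost surely continuously $\mathcal{H}$-differentiable already belongs to $\mathbb{D}^{1,2}_{\textnormal{loc}}$. Since $(\Omega,\mathcal{H},\mathrm{P})$ is an abstract Wiener space and the Fr\'echet differentiability from Proposition \ref{frechet} is with respect to a norm weaker than the $\mathcal{H}_H$-norm, the hypothesis of that proposition is met automatically, and $X_t^i\in\mathbb{D}^{1,2}_{\textnormal{loc}}$ follows in one line. The identification $DX_t^i=\Phi_t^i(\cdot)$ is then a short computation unwinding $D_{\mathcal{R}_H\varphi}X_t^i=\langle \Phi_t^i,\varphi\rangle_{\mathcal H}$ via the isometry $K_H^{\ast}$.

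Your approach---truncating $\sigma$ and $b$ on the events $\Omega_N=\{\|W^H\|_{1-\alpha}\le N\}$, applying Theorems \ref{th:1} and \ref{th:3} to the truncated equation, and invoking Fernique plus the Sugita--Kusuoka criterion to get $X_t^{(N),i}\in\mathbb{D}^{1,2}$---is essentially the machinery the paper reserves for the later Theorem \ref{ma}, where one actually needs full $\mathbb{D}^{\infty}$ membership and moment bounds on the derivatives. For the present statement it is overkill: you gain an explicit localizing sequence with genuine $\mathbb{D}^{1,2}$ elements, at the cost of verifying that the cutoffs preserve \textbf{(H1)}--\textbf{(H3)} and that $w=\sigma(t,s,X_s^{(N)})$ is Lipschitz in $s$ (needed for \textbf{(H3)}), whereas the paper bypasses all of this by a single citation.
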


\begin{proof}
By Proposition \ref{frechet}, the mapping
$$\omega \in W_2^{1-\alpha}(0,T; \R^m)\rightarrow X(\omega) \in W_1^{\alpha}(0,T; \R^d)$$ is Fr\'echet differentiable and for all $\varphi \in \mathcal{H}$ and $i=1,\ldots,d$, the Fr\'echet derivative
$$
D_{\mathcal{R}_H \varphi}X_t^i=\frac{d}{d\epsilon}X_t^i(\omega+\epsilon\mathcal{R}_H \varphi) \vert_{\epsilon=0}
$$
exists, which proves the first statement of the theorem.
Moreover, by \cite[Proposition 4.1.3.]{Nua}, this implies that for any $t>0$ 
$X_t^i$ belongs to the space $\mathbb{D}^{1,2}_{\text{loc}}$.

The derivative $
D_{\mathcal{R}_H \varphi}X_t^i$  coincides with
$
\langle DX_t^i, \varphi \rangle_{\mathcal{H}},
$
where $D$ is the usual Malliavin derivative. Furthermore, by Proposition \ref{frechet}, for any $\varphi \in \mathcal{H}$ and $i=1,\ldots,d$,
\begin{equation*} \begin{split}
D_{\mathcal{R}_H \varphi} X_t^i &=\sum_{j=1}^m \int_0^t \Phi_t^{ij}(s) d(\mathcal{R}_H \varphi)^j(s)\\
&=\sum_{j=1}^m \int_0^t \Phi_t^{ij}(s) \left( \int_0^s \partial_s K_H(s,u)(\mathcal{K}_H^{\ast} \varphi)^j (u) du\right) ds\\
&=\sum_{j=1}^m \int_0^T (\mathcal{K}_H^{\ast} \Phi_t^i)^j(s) (\mathcal{K}_H^{\ast} \varphi)^j (s) ds\\
&=\langle \Phi_t^i, \varphi \rangle_{\mathcal{H}}
\end{split}
\end{equation*}
and equation (\ref{eq:der}) follows from (\ref{eqn}). This concludes the proof.
\end{proof}

We next derive the existence of the density.
\begin{theorem}
Assume the hypotheses of Lemma \ref{lema3}. Assume also the following nondegeneracy condition on $\sigma$: for all $s,t \in [0,T]$, the vector space spanned by 
$$
\{ (\sigma^{1j}(t,s,x_0), \dots, \sigma^{dj}(t,s,x_0)), 1\leq j \leq m\}
$$
is $\R^d$. Then, for any $t>0$ the law of the random vector $X_t$ is absolutely continuous with respect to the Lebesgue measure on $\R^d$.
\end{theorem}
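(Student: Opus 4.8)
The plan is to prove absolute continuity via the Bouleau--Hirsch criterion. Theorem \ref{d12loc} already provides that $X_t^i \in \mathbb{D}^{1,2}_{\text{loc}}$ for every $i=1,\dots,d$ and every $t>0$, together with the explicit derivative $D_s^j X_t^i = \Phi_t^{ij}(s)$ from (\ref{eq:der})--(\ref{eqn}). Hence it suffices to show that the Malliavin covariance matrix $\gamma_t=(\gamma_t^{ij})_{1\le i,j\le d}$, with $\gamma_t^{ij}=\langle DX_t^i, DX_t^j\rangle_{\mathcal{H}}$, is invertible almost surely, i.e. $\det\gamma_t>0$ a.s. Once this is established, Bouleau--Hirsch (see \cite{Nua}) yields that the law of $X_t$ is absolutely continuous with respect to Lebesgue measure on $\R^d$.

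First I would fix $v\in\R^d$ and write the associated quadratic form as
$$
v^{\top}\gamma_t v = \Big\|\sum_{i=1}^d v_i\, DX_t^i\Big\|_{\mathcal{H}}^2,
$$
where, by (\ref{eqn}), the underlying element of $\mathcal{H}$ is the function $s\mapsto\big(\sum_{i} v_i\Phi_t^{ij}(s)\big)_{j=1}^m$, which is continuous in $s$ precisely by Proposition \ref{prop9}. The key analytic input is that for $H>\tfrac12$ the bilinear form $\langle f,g\rangle_{\mathcal{H}}=\alpha_H\iint_{[0,T]^2} f(u)g(w)\,|u-w|^{2H-2}\,du\,dw$ is positive definite on continuous functions, so that $\|f\|_{\mathcal{H}}=0$ forces $f\equiv 0$. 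Consequently, if $\det\gamma_t=0$ on a set of positive probability, there is a measurable unit random vector $v$ for which $v^{\top}\gamma_t v=0$, and hence $\sum_{i} v_i\Phi_t^{ij}(s)=0$ for every $s\in[0,t]$ and every $j=1,\dots,m$.

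Next I would bring in the nondegeneracy at $x_0$. Letting $s\downarrow 0$ in the vanishing identity and using $X_s\to x_0$, the representation (\ref{eqn}), and the a priori estimates of Propositions \ref{prop2} and \ref{prop9}, one obtains a linear relation among the columns of the matrix built from $\sigma^{ij}(\cdot,\cdot,x_0)$. Since by hypothesis the family $\{(\sigma^{1j}(t,s,x_0),\dots,\sigma^{dj}(t,s,x_0))\}_{1\le j\le m}$ spans $\R^d$ for all $s,t\in[0,T]$, this relation should force $v=0$, contradicting $|v|=1$. This gives $\det\gamma_t>0$ a.s.\ and completes the argument.

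The hard part will be the last step, namely transferring the ellipticity at $x_0$ into the positivity of $\gamma_t$. In the stochastic differential equation case of \cite{HN,NS}, the Malliavin derivative factors through the Jacobian flow as $D_sX_t=Y_tY_s^{-1}\sigma(X_s)$, so that letting $s\to 0$ reduces the question at once to the invertibility of $Y_t$ and the full rank of $\sigma(x_0)$. Because of the genuine time dependence $\sigma(t,s,x)$, equation (\ref{eqn}) has a Volterra (non-Markovian) structure and \emph{no} such multiplicative decomposition is available: the corrective stochastic and Lebesgue integrals in $\Phi_t^{ij}(s)$ do not vanish in the limit $s\to 0$. Controlling these terms, so that the limiting relation genuinely involves only $\sigma(\cdot,\cdot,x_0)$, is exactly where the extended linear-equation analysis of Propositions \ref{prop2} and \ref{prop9} and the H\"older regularity in $s$ are indispensable, and it is the technically delicate point of the proof.
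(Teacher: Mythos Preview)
Your overall strategy coincides with the paper's: reduce to the Bouleau--Hirsch criterion via Theorem~\ref{d12loc} and \cite[Theorem~2.1.2]{Nua}, then show that the Malliavin matrix $\Gamma_t$ is almost surely invertible. The paper's own proof says nothing beyond this reduction and a pointer to \cite[Theorem~8]{NS}, so your sketch is in fact more explicit than what the paper provides. Your use of the positive definiteness of the $\mathcal H$--inner product to pass from $v^\top\Gamma_t v=0$ to $\sum_i v_i\Phi_t^{ij}(s)=0$ for every $s\in[0,t]$ is exactly the first move in \cite{NS}, and it relies on the continuity in $s$ established in Proposition~\ref{prop9}.

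The gap is in your final step. Sending $s\downarrow 0$ in \eqref{eqn} does \emph{not} isolate $\sigma(\cdot,\cdot,x_0)$: the integrals $\int_s^t(\cdots)$ converge to $\int_0^t(\cdots)$, not to zero, and Propositions~\ref{prop2} and~\ref{prop9} give only bounds and H\"older regularity, never a vanishing statement. So the ``limiting relation genuinely involving only $\sigma(\cdot,\cdot,x_0)$'' that you announce cannot be obtained this way. What actually drives the argument in \cite[Theorem~8]{NS} is different: one introduces the $d\times d$ matrix solution $\Psi_\cdot(s)$ of the \emph{same} linearised equation with inhomogeneity $I$, checks that $\Phi_t(s)=\Psi_t(s)\,\sigma(X_s)$ by uniqueness, and then uses that $\Psi_t(s)$ is invertible. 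Hence $v^\top\Phi_t(s)=0$ becomes $\big(\Psi_t(s)^\top v\big)^\top\sigma(X_s)=0$, and evaluating at $s=0$ (where $X_0=x_0$) contradicts the spanning hypothesis. The step you must supply in the Volterra setting is precisely this: identify the fundamental matrix $\Psi_t(s)$ of the linear equation \eqref{eqn}, prove its invertibility, and verify the factorisation (which here is subtler because the inhomogeneity $\sigma(t,s,X_s)$ depends on $t$). That, and not an asymptotic control of the integral remainders, is the missing ingredient.
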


\begin{proof}
By Theorem \ref{d12loc} and \cite[Theorem 2.1.2]{Nua} it suffices to show that the Malliavin matrix $\Gamma_t$ of $X_t$ defined by
$$
\Gamma_t^{ij}=\langle DX_t^i, DX_t^j \rangle_{\mathcal{H}}
$$
is invertible a.s., which follows along the same lines as in the proof of \cite[Theorem 8]{NS}.
\end{proof}

We finally consider the case that $\sigma$ is bounded and show the existence and smoothness of the density.
As before, we first study the Malliavin differentiability of the solution.
\begin{theorem} \label{ma}
Assume the hypotheses of Theorem \ref{th:1},
that $b^i(t,s, \cdot), \sigma^{i,j}(t,s, \cdot)$ belong to $C^\infty_b$ for all $s,t \in [0,T]$ and that the partial derivatives of all orders of $b$ and $\sigma$ satisfy
${\bf (H2)}$ and ${\bf (H1)}$ respectively. Then  for any $t>0$, $X_t^i$ belongs to the space $\mathbb{D}^{\infty}$ and the $n$th iterated derivative satisfies the following equation
for $i=1,\dots,d$, $j_1,\ldots,j_n \in \{1,\dots,m\}$,
\begin{equation} \label{iter_der}\begin{split}
D_{s_1}^{j_1} \cdots D_{s_n}^{j_n} X_t^i 
=&\sum_{q=1}^n D_{s_1}^{j_1} \cdots  \check{D}_{s_{q}}^{j_{q}} \cdots D_{s_n}^{j_n}\sigma^{ij_{\ell}}(t,s_{\ell},X_{s_{\ell}})\\
&+\sum_{\ell=1}^m
\int_{s_1 \vee \cdots \vee s_n}^t  D_{s_1}^{j_1} \cdots  D_{s_n}^{j_n}\sigma^{i \ell}(t,r,X_r)  dW_r^{H,\ell}\\
&+\int_{s_1 \vee \cdots \vee s_n}^t D_{s_1}^{j_1} \cdots  D_{s_n}^{j_n}b^i(t,r,X_r) dr,
\end{split}
\end{equation}
if $s_1 \vee \cdots \vee s_n\leq t$ and 0 otherwise. The notation $\check{D}_{s_q}^{j_q}$ means that the factor $D_{s_{q}}^{j_q}$ is omitted in the sum. When $n=1$ this equation coincides with \textnormal{(\ref{eq:der})}.
\end{theorem}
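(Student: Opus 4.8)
The plan is to argue by induction on the order $n$ of differentiation, using the linear a priori estimate of Theorem~\ref{th:3} as the workhorse: the decisive feature of the present (bounded $\sigma$) setting, as opposed to that of Theorem~\ref{d12loc}, is that Theorem~\ref{th:1} makes all supremum norms integrable. For the base case $n=1$ I would first upgrade the local statement of Theorem~\ref{d12loc} to genuine global membership $X_t^i\in\mathbb{D}^{1,p}$ for every $p\ge 2$. Indeed, since $\sigma$ is bounded, estimate \eqref{eq:result1} gives $\|X\|_\infty\le c\bigl(1+\|W^H\|_{1-\alpha}^{1/(1-\alpha)}\bigr)$ pathwise, and because $1/(1-\alpha)<2$ the $(1-\alpha)$-Hölder norm of $W^H$ has Gaussian tails (Fernique), so $\sup_t|X_t|\in L^p$ for all $p$. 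The first derivative $\Phi_t^{ij}(s)=D_s^jX_t^i$ solves the linear system \eqref{z}/\eqref{eqn} with $h=\partial_x b$, $f=\partial_x\sigma$ (bounded along $X$) and source $w=\sigma(\cdot,\cdot,X_\cdot)$ (bounded and Hölder), i.e. hypothesis {\bfseries(H3)} holds; Theorem~\ref{th:3} then yields $\sup_s\|\Phi_\cdot(s)\|_\infty\le 2(1+\|\sigma\|_\infty)\exp\!\bigl(cT(\cdots+\cdots\|W^H\|_{1-\alpha})^{1/(1-\alpha)}\bigr)$, whose $L^p$-norm is finite by Fernique. Since $\|\varphi\|_{\mathcal H}\lesssim\|\varphi\|_{L^{1/H}}\lesssim\|\varphi\|_\infty$ for $H>\tfrac12$, this controls the $\mathcal H$-norm and gives $X_t\in\mathbb{D}^{1,p}$ with \eqref{eq:der}.

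For the inductive step, assume the result up to order $n-1$. Differentiating the equation for $D^{n-1}X_t$ once more in the Malliavin sense and applying the Leibniz rule (equivalently, Faà di Bruno to the compositions $\sigma(t,r,X_r)$ and $b(t,r,X_r)$, legitimate since $\sigma,b\in C^\infty_b$ in space) produces for $D^nX_t$ a linear equation of the type \eqref{z}: its homogeneous part is exactly that of \eqref{eqn}, namely the terms in which all $n$ derivatives fall on the inner factor $X_r$, with coefficients $\partial_{x}\sigma(t,r,X_r)$ and $\partial_{x}b(t,r,X_r)$; the inhomogeneous term $w_n$ gathers every term in which at least one derivative hits a coefficient. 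Structurally $w_n$ is a finite sum of products of higher spatial derivatives of $\sigma,b$ (bounded, being $C^\infty_b$) evaluated at $X$, times iterated Malliavin derivatives of $X$ of orders $\le n-1$; by the inductive hypothesis each such factor lies in $C^{1-\alpha}(0,T)$ almost surely with $L^q$-integrable norm for every $q$, so $w_n$ inherits these properties.

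Applying Theorem~\ref{th:3} pathwise to this linear equation gives
\[
\|D^nX_t\|_\infty\le 2\bigl(1+\|w_n\|_\infty\bigr)\exp\!\Bigl(cT\bigl(K_{T,\alpha}^{(5)}+K_{T,\alpha,\beta}^{(6)}\|W^H\|_{1-\alpha}\bigr)^{1/(1-\alpha)}\Bigr).
\]
Taking $L^p$-norms, $\|w_n\|_\infty\in L^q$ for all $q$ (induction) and the exponential factor is in $L^q$ for all $q$ (Fernique, since $1/(1-\alpha)<2$), so Hölder's inequality yields $\|D^nX_t\|_\infty\in L^p$, hence $D^nX_t\in L^p(\Omega;\mathcal H^{\otimes n})$ for all $p$. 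To identify this solution as the genuine $n$th Malliavin derivative I would realize $X$ as the limit of its Picard iterates $X^{(k)}$, for which \eqref{iter_der} holds by the differentiation rules already established, and pass to the limit with the closedness criterion \cite[Lemma 1.5.3]{Nua}: the $L^2$-convergence $X^{(k)}_t\to X_t$ together with the uniform-in-$k$ bounds just obtained forces $X_t^i\in\mathbb{D}^{n,p}$ and weak convergence of the derivatives, whence \eqref{iter_der}. Since $n$ and $p$ are arbitrary, $X_t^i\in\mathbb{D}^\infty$.

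The main obstacle is the combinatorial and regularity bookkeeping of the inhomogeneous term $w_n$. Theorem~\ref{th:3} requires the source to satisfy hypothesis {\bfseries(H3)}, so one must verify not merely that $w_n$ is supremum-bounded but that \emph{each} summand retains $C^{1-\alpha}$ regularity with $L^q$-integrable norm; this is precisely where the time-dependence of the coefficients—the feature that already made Section~3 substantially harder than the corresponding arguments in \cite{NS}—reappears, since differentiating $\sigma(t,r,X_r)$ in $t$ must be controlled together with the spatial derivatives. A secondary point is ensuring {\bfseries(H3)} holds at every level: boundedness of the coefficients along the solution is automatic for $\sigma\in C^\infty_b$, while for $b$ one uses that $\sup_t|X_t|\in L^p$ to localize. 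The clean separation of \eqref{iter_der} into a fixed homogeneous part plus a strictly lower-order inhomogeneity is what propagates the induction.
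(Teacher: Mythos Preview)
Your proposal is correct and its backbone---induction on the order, Theorem~\ref{th:3} as the pathwise estimate for the linear equation satisfied by each iterated derivative, and Fernique to integrate the resulting exponential of $\|W^H\|_{1-\alpha}^{1/(1-\alpha)}$---is exactly the paper's. The one substantive difference is the \emph{identification} step. The paper does not use Picard iterates and the closedness lemma; instead it first shows (by iterating the argument of Proposition~\ref{frechet} along the lines of \cite[Proposition~5 and Lemma~10]{NS}) that the deterministic solution map $g\mapsto x(g)$ is infinitely Fr\'echet differentiable on $W_2^{1-\alpha}$, transfers this to $X_t\in\mathbb{D}^{k,p}_{\mathrm{loc}}$ via \cite[Proposition~4.1.3]{Nua} exactly as in Theorem~\ref{d12loc}, and then upgrades from local to global membership using the $L^p$ bounds and \cite[Lemma~4.1.2]{Nua}. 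Your Picard route is equally valid and perhaps more familiar from the Brownian case, but it requires you to set up and control the approximants separately; the paper's route recycles the deterministic Fr\'echet machinery already built in Section~3 and avoids any approximation argument.

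One caveat you already flag but should take seriously: hypothesis \textbf{(H3)} in Theorem~\ref{th:3} asks for $w$ Lipschitz, whereas the inhomogeneous term $w_n$ at order $n\ge 2$ contains stochastic integrals (coming from the ``cross'' terms of Fa\`a di Bruno) and is therefore only $C^{1-\alpha}$ in $t$. This is harmless---inspecting the proof of Theorem~\ref{th:3}, the Lipschitz bound on $w$ is used only to control $|w_{t'}-w_t|/(t'-t)^{1-\alpha}$, which a $(1-\alpha)$-H\"older bound does equally well---but you should state the needed extension explicitly rather than invoke \textbf{(H3)} verbatim.
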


\begin{proof}
By Theorem \ref{d12loc}, for any $t>0$ $X_t^i$ belongs to $\bbd^{1,2}_{\textnormal{loc}}$
and the Malliavin derivative satisfies \eqref{eq:der}.
Applying Theorem \ref{th:3} to the system formed by equations \eqref{eq:Volterra} and \eqref{eq:der} we obtain that a.s. 
\begin{equation} \label{dere}
\sup_{s,t \in [0,T]} |D_s^jX_t^i| \leq 2\left(\|\sigma\|_\infty+1\right) \exp\left(T\left(K_{T,\alpha}^{(5)}+K_{T,\alpha,\beta}^{(6)}\|W^H\|_{1-\alpha}\right)^{1/(1-\alpha)} \vee 1 \vee T\right),
\end{equation}
which implies that for all $p\geq 2$,
\[\sup_{t \in [0,T]}\be\left(\left|\sum_{j=1}^m \int_0^t\int_0^t D_s^{j} X_t^i D_r^{j} X_t^i |r-s|^{2H-2}ds dr\right|^p\right)<\infty.\]
This and \cite[Lemma 4.1.2]{Nua} show that the random variable $X_t^i$ belongs to the Sobolev space $\bbd^{1,p}$ for all $p\geq 2$.
Similarly, it can be proved that $X_t^i$ belongs to the Sobolev space $\bbd^{k,p}$ for all $p,\,k\geq 2$. For the sake of conciseness, we only sketch the main steps. First, by induction, following exactly along the same lines as in the proofs of \cite[Proposition 5 and Lemma 10]{NS} and Proposition \ref{frechet}, it can be shown that the deterministic mapping $x$ defined in Section 3 is infinitely differentiable. Second, by a similar argument as in the proof of Theorem \ref{d12loc}, we have that for all $t>0$, $X_t^i$ is almost surely infinitely differentiable in the directions of the Cameron-Martin space and it belongs to the space $\bbd^{k,p}_{\text{loc}}$ for all $p,\,k\geq 2$. Finally, using equation (\ref{iter_der}), the estimate for linear equations obtained in Theorem \ref{th:3} and an induction argument, we obtain that for all $k, p \geq 2$,
$$
\sup_{t \in [0,T]} \be\left(\Vert D^{(k)} X_t \Vert^p_{\mathcal{H}^{\otimes k}}\right)<\infty,
$$
where $D^{(k)}$ denotes the $k$th iterated derivative.
This concludes the desired claim.
\end{proof}
The next theorem extends and corrects the proof of \cite[Theorem 7]{HN} as there is a mistake in the last step of the proof. 
\begin{theorem}
Assume the hypotheses of Theorem \ref{ma} and that $\sigma(t,s,\cdot)$ is uniformly elliptic,  that is, for all $s,t \in [0,T]$, $x, \xi \in \R^d$ with $\vert \xi \vert=1$,
$$
\sum_{j=1}^m \big(\sum_{i=1}^d \sigma^{i j}(t,s,x) \xi_i\big)^2 \geq \rho^2>0,
$$ 
for some $\rho>0$.
Then for any $t>0$ the probability law of $X_t$ has an $C^\infty$ density.
\end{theorem}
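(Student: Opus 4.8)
The plan is to invoke the standard Malliavin-calculus criterion for smoothness of laws [Nua, Theorem 2.1.4]: since Theorem \ref{ma} already gives $X_t^i\in\mathbb{D}^{\infty}$ for every $i$, it suffices to prove that the Malliavin matrix $\Gamma_t=(\langle DX_t^i,DX_t^{i'}\rangle_{\mathcal H})_{1\le i,i'\le d}$ is a.s.\ invertible with $\be[(\det\Gamma_t)^{-p}]<\infty$ for all $p\ge 1$. The existence of the density was already obtained under the weaker spanning condition, so the entire difficulty lies in this quantitative (negative-moment) nondegeneracy estimate.

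First I would rewrite the associated quadratic form using the explicit $\mathcal H$-inner product available for $H>\tfrac12$. For a unit vector $\xi\in\R^d$, setting $Y_s^j:=\sum_{i=1}^d\xi_i\,D_s^jX_t^i$, one has
\[
\xi^{\top}\Gamma_t\,\xi = H(2H-1)\sum_{j=1}^m\int_0^t\int_0^t Y_s^j\,Y_r^j\,|s-r|^{2H-2}\,ds\,dr .
\]
Since the kernel is nonnegative, I would restrict the integration to a small diagonal block $[t-\epsilon,t]^2$ and use the exact value $\int_{t-\epsilon}^t\int_{t-\epsilon}^t|s-r|^{2H-2}\,ds\,dr=\epsilon^{2H}/(H(2H-1))$. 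Expanding $Y_s^j=Y_t^j+(Y_s^j-Y_t^j)$ then isolates the leading term $\epsilon^{2H}\sum_j (Y_t^j)^2$.

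The two inputs that feed into this expansion are as follows. Evaluating \eqref{eq:der} at $s=t$ makes the two integral terms vanish, so $D_t^jX_t^i=\sigma^{ij}(t,t,X_t)$, and the uniform ellipticity hypothesis applied at $(t,t,X_t)$ yields $\sum_{j=1}^m (Y_t^j)^2=\sum_{j=1}^m\big(\sum_i\xi_i\sigma^{ij}(t,t,X_t)\big)^2\ge\rho^2$. For the oscillation I would bound $|Y_s^j-Y_t^j|$ directly from \eqref{eq:der}: the difference $\sigma^{ij}(t,s,X_s)-\sigma^{ij}(t,t,X_t)$ is controlled by {\bf (H1)} and $\|X\|_{1-\alpha}$, while the integrals over $[s,t]$ are controlled, via \eqref{eq:forpart} and the Young-type estimates of \cite{BR}, by $\|W^H\|_{1-\alpha}$ together with $\sup_{s,r}|D_s X_r|$ and the relevant H\"older norms of the derivative process (the stochastic analogue of Proposition \ref{prop9}). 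This gives $|Y_s^j-Y_t^j|\le Z\,|t-s|^{\gamma}$ with $\gamma=\beta\wedge(1-\alpha)>0$, uniformly in $\xi$, for a random constant $Z$. Combining the two inputs produces, uniformly over $|\xi|=1$,
\[
\xi^{\top}\Gamma_t\,\xi \;\ge\; \rho^2\,\epsilon^{2H}-C\,(1+Z)^2\,\epsilon^{2H+\gamma}.
\]
Choosing $\epsilon\wedge t$ with $\epsilon$ of order $(1+Z)^{-2/\gamma}$ forces the second term to absorb at most half of the first, giving $\lambda_{\min}(\Gamma_t)\ge c\,(1+Z)^{-4H/\gamma}$ and hence $\det\Gamma_t\ge c'(1+Z)^{-4Hd/\gamma}$.

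To close the argument I would verify that $Z$ has finite moments of every order. This is where the crucial point — and the correction to \cite[Theorem 7]{HN} — enters: $Z$ must be built only from quantities bounded by $\exp\!\big(c\|W^H\|_{1-\alpha}^{1/(1-\alpha)}\big)$-type expressions, namely $\|X\|_{1-\alpha}$, $\|W^H\|_{1-\alpha}$, and the derivative bound \eqref{dere} from Theorem \ref{th:3}. Because the exponent $1/(1-\alpha)$ is strictly less than $2$ for $\alpha<\tfrac12$, Fernique's theorem makes all these exponentials integrable, so $\be[Z^q]<\infty$ for every $q$; one must avoid the estimate of Proposition \ref{prop2}, whose exponent $1/(1-2\alpha)$ may exceed $2$ and would break integrability. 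It then follows that $\be[(\det\Gamma_t)^{-p}]\le c''\,\be[(1+Z)^{4Hdp/\gamma}]<\infty$ for all $p$, which together with $X_t\in\mathbb{D}^{\infty}$ yields the $C^{\infty}$ density. The main obstacle is precisely this last paragraph: securing the oscillation control and the negative moments with a random constant that is uniform in $\xi$ and integrable to all orders, which requires using the sharp a priori bounds with exponent $1/(1-\alpha)$ rather than the cruder ones.
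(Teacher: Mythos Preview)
Your overall strategy is sound and close in spirit to the paper's, but there is a genuine gap in the localization step. You write the quadratic form as $c_H\sum_j\int_0^t\int_0^t Y_s^jY_r^j|s-r|^{2H-2}\,ds\,dr$ and then claim that, because the kernel $|s-r|^{2H-2}$ is nonnegative, you may bound this from below by the same integral over $[t-\epsilon,t]^2$. This is not correct: nonnegativity of the kernel does not help since the product $Y_s^jY_r^j$ has no sign, and in general the restricted integral can exceed the full one (take $Y$ slightly negative on $[0,t-\epsilon]$ and positive on $[t-\epsilon,t]$). The paper avoids this by first passing through the isometry $\|\varphi\|_{\mathcal H}=\|K_H^{\ast}\varphi\|_{L^2(0,t;\R^m)}$, so that $\xi^{\top}\Gamma_t\xi=\int_0^t |(K_H^{\ast}Y)(s)|^2\,ds$, and then restricting the \emph{single} integral to $[t-\epsilon,t]$; this is legitimate because the integrand is a square. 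Since $(K_H^{\ast}Y)^j(s)=\int_s^t Y_u^j\,\partial_uK_H(u,s)\,du$, for $s\in[t-\epsilon,t]$ only the values of $Y$ on $[t-\epsilon,t]$ enter, which is exactly the localization you want.

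Beyond this, the executions diverge. The paper keeps $\epsilon$ deterministic, decomposes $(K_H^{\ast}Y)(s)$ into a $\sigma(t,u,X_u)$-part and an integral remainder, and shows $\xi^{\top}\Gamma_t\xi\ge \tfrac12 A_1-|A_2|-B$ with $A_1\ge c\epsilon^{2H}$ and $\be(|A_2|^q+|B|^q)\le C\epsilon^{q(2H+\gamma)}$, uniformly in $\xi$; it then invokes the small-ball lemma \cite[Proposition~3.5]{DKN} to obtain all negative moments of $\det\Gamma_t$. Your pathwise scheme (freeze at $Y_t^j=\sigma^{ij}(t,t,X_t)\xi_i$, bound the oscillation by $Z(t-s)^{\gamma}$, choose $\epsilon$ random) is a valid alternative in principle, but it requires a pathwise H\"older bound on $s\mapsto D_sX_t$ whose random constant lies in every $L^p$. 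You correctly flag that the exponent $1/(1-2\alpha)$ of Proposition~\ref{prop2} would be fatal; note however that Proposition~\ref{prop9}, which you invoke for this H\"older control, itself rests on Proposition~\ref{prop2}, so you would have to re-derive that H\"older estimate from the sharper Theorem~\ref{th:3}-type bounds. The paper's moment route sidesteps this entirely: it never needs a pathwise H\"older norm of $D_sX_r$, only $\sup_{s,r}|D_sX_r|$ from \eqref{dere} together with moment H\"older estimates on $X$.
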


\begin{proof}
By
\cite[Theorem 7.2.6]{NuaNua} it suffices to show that ${\rm E} ( (\text{det} (\Gamma_t))^{-p} )<\infty $ for all $p>1$.
We write
$$
\text{det} (\Gamma_t) \geq \inf_{\vert \xi \vert=1} (\xi^T \Gamma_t \xi)^d. 
$$
Fix $\xi \in \R^d$ with $\Vert \xi \Vert=1$ and $\epsilon \in (0,1)$. Then
\begin{align*}
\xi^T \Gamma_t \xi&=\Vert \sum_{i=1}^d D X^i_t \xi_i
 \Vert^2_{\mathcal{H}}=\Vert \sum_{i=1}^d K^{\ast}_H (D X^i_t) \xi_i
 \Vert^2_{L^2(0,t;\R^m)} \\
 &\geq \Vert \sum_{i=1}^d K^{\ast}_H (D X^i_t ) \xi_i
 \Vert^2_{L^2(t-\epsilon,t;\R^m)}\geq \frac12 A-B,
\end{align*}
where 
\begin{align*}
A:=&\sum_{j=1}^m \int_{t-\epsilon}^t  \sum_{i,k=1}^d \int_s^t \int_s^t\sigma^{ij}(t,u, X_u)  \sigma^{kj}(t,v, X_v) \partial_u K_H(u,s) \partial_v K_H(v,s)\xi_i \xi_k du  dv ds,\\
B:=&\sum_{j=1}^m \int_{t-\epsilon}^t \bigg( \sum_{i=1}^d \int_s^t 
\bigg(\sum_{k=1}^d\sum_{\ell=1}^m
\int_u^t \partial_{x_k} \sigma^{i\ell}(t,r,X_r) D_u^jX_r^k dW_r^{H,\ell} \\
&\quad +\sum_{k=1}^d \int_u^t \partial_{x_k} b^i(t,r,X_r)D_u^j X_r^k dr
\bigg)\partial_u K_H(u,s) \xi_i du  \bigg)^2 ds.
\end{align*}
We next we add and substract the term
$\sigma^{ij}(t,u, X_u) \sigma^{kj}(t,u, X_u)$ inside $A$ to obtain
that $A=A_1+A_2$, where
\begin{align*}
A_1:=&\sum_{j=1}^m \int_{t-\epsilon}^t \int_s^t \int_s^t \left(\sum_{i=1}^d \sigma^{i j}(t,u,X_u) \xi_i\right)^2  \partial_u K_H(u,s) \partial_v K_H(v,s) du dv ds,\\
A_2:=&\sum_{j=1}^m \int_{t-\epsilon}^t  \sum_{i,k=1}^d \int_s^t \int_s^t\sigma^{ij}(t,u, X_u)\left(\sigma^{kj}(t,v, X_v) - \sigma^{kj}(t,u, X_u)\right)\\
&\qquad \qquad \times \partial_u K_H(u,s) \partial_v K_H(v,s)\xi_i \xi_k du  dv ds.
\end{align*}
By the uniform ellipticity property, we get that
\begin{align*}
A_1 &\geq \rho^2  \int_{t-\epsilon}^t \left(\int_s^t \partial_u K_H(u,s) du\right)^2 ds = c_H \int_{t-\epsilon}^t \left(\int_s^t (\frac{u}{s})^{H-1/2}(u-s)^{H-\frac32} du\right)^2 ds\\
& \geq c_H \int_{t-\epsilon}^t \left(\int_s^t (u-s)^{H-\frac32} du\right)^2 ds=c_H \epsilon^{2H}.
\end{align*}
Moreover, since $\sigma$ is bounded, using H\"older's inequality and hypothesis {\bf (H1)},
for any $q\geq 1$, we get that
\begin{align*}
{\rm E} \vert A_2 \vert^q &\leq C_{T,q} \epsilon^{3(q-1)} \int_{t-\epsilon}^t   \int_{t-\epsilon}^t \int_{t-\epsilon}^t\big({\rm E}(\vert X_u-X_v\vert^q)+{\rm E}(\vert X_u \vert^q )\vert u-v\vert^{\beta q}\\
&\qquad\qquad  + {\rm E}(\vert X_u \vert^q \vert X_u-X_v\vert^{\delta q})\big)(\partial_u K_H(u,s) \partial_v K_H(v,s))^q du  dv ds \\
&\leq C_{T,q} \epsilon^{3(q-1)} \int_{t-\epsilon}^t   \int_{t-\epsilon}^t \int_{t-\epsilon}^t\left(\vert u-v\vert^{(1-\alpha) q}+\vert u-v\vert^{\beta q}+\vert u-v\vert^{\delta q (1-\alpha)} \right)\\
&\qquad \qquad \times (\partial_u K_H(u,s) \partial_v K_H(v,s))^q du  dv ds \\
&\leq C_{T,q} \epsilon^{q(2H+\min\{1-\alpha, \beta, \delta(1-\alpha)\}}.
\end{align*}
We are left to bound ${\rm E} \vert B \vert^q$. Since $\partial_x b(t,s,x)$ is bounded, using H\"older's inequality and (\ref{dere}), we obtain that for all $q \geq 1$,
\begin{equation*} \begin{split}
&{\rm E} \bigg\vert\sum_{j=1}^m \int_{t-\epsilon}^t \bigg( \sum_{i,k=1}^d \int_s^t 
 \int_u^t \partial_{x_k} b^i(t,r,X_r)D_u^j X_r^k 
\partial_u K_H(u,s) \xi_i dr du  \bigg)^2 ds\bigg\vert^q \\
&\leq C_{\alpha, T,q} \epsilon^{5q-2} \int_{t-\epsilon}^t 
\int_{t-\epsilon}^t (\partial_u K_H(u,s))^{2q} du ds \\
 &\leq C_{\alpha, T,q} \epsilon^{q(2H+2)}.
\end{split}
\end{equation*}
Similarly, for all $q \geq 1$, we have that
\begin{equation*} \begin{split}
&{\rm E} \bigg\vert\sum_{j=1}^m \int_{t-\epsilon}^t \bigg( \sum_{i,k=1}^d \sum_{\ell=1}^m \int_s^t 
\left(\int_u^t \partial_{x_k} \sigma^{i\ell}(t,r,X_r) D_u^jX_r^k dW_r^{H,\ell}\right)
\partial_u K_H(u,s) \xi_i du  \bigg)^2 ds\bigg\vert^q \\
&\leq C_{\alpha, T,q} \epsilon^{3q-2+(1-\alpha)2q} \int_{t-\epsilon}^t 
\int_{t-\epsilon}^t (\partial_u K_H(u,s))^{2q} du ds \\
 &\leq C_{\alpha, T,q} \epsilon^{q(2H+2(1-\alpha))}.
\end{split}
\end{equation*}
Appealing to \cite[Proposition 3.5]{DKN} we conclude the desired result.
\end{proof}

\end{document}